\title{A Study of Fibonacci Cordial Labeling in Structured Graph Families}
\author{Sarbari Mitra, Soumya Bhoumik\\
Department of Mathematics\\
Fort Hays State University
}
\newtheorem{thrm}{Theorem}[section]
\newtheorem{lem}[thrm]{Lemma}
\theoremstyle{definition}
\newtheorem{defin}[thrm]{Definition}
\newtheorem{conj}[thrm]{Conjecture}
 \newcounter{case}
 \renewcommand{\thecase}{\arabic{case}}
\newcounter{subcase}
 \renewcommand{\thesubcase}{\alph{subcase}}
\def\mod{{\rm mod\ }}
\def\GP{{\rm GP}}
\def\CH{{\rm CH}}
\newcounter{cases}
\newcounter{subcases}
\newenvironment{mycases}
  {%
    \setcounter{cases}{0}%
    \def\case
      {%
        \par\noindent
        \refstepcounter{cases}%
        \textbf{Case \thecases.}
      }%
  }
  {%
    \par
  }
\renewcommand*\thecases{\arabic{cases}}
\newenvironment{proof}{\noindent {\sc Proof}.}
                {\phantom{a} \hfill \framebox[2.2mm]{ } \bigskip}
\def\BState{\State\hskip-\ALG@thistlm}
\providecommand{\keywords}[1]{\textbf{\textit{Keywords.}} #1}
\begin{document}

\pagestyle{plain}

\baselineskip = 1.2\normalbaselineskip

\maketitle

\begin{abstract}
A \emph{Fibonacci cordial labeling} of a graph \( G \) is an injective function \( f: V(G) \rightarrow \{F_0, F_1, \dots, F_n\} \), where \( F_i \) denotes the \( i^{\text{th}} \) Fibonacci number, such that the induced edge labeling \( f^*: E(G) \rightarrow \{0,1\} \), given by \( f^*(uv) = (f(u) + f(v)) \) $(\bmod\ 2)$, satisfies the balance condition \( |e_f(0) - e_f(1)| \le 1 \). Here, \( e_f(0) \) and \( e_f(1) \) represent the number of edges labeled 0 and 1, respectively. A graph that admits such a labeling is termed a \emph{Fibonacci cordial graph}. In this paper, we investigate the existence and construction of Fibonacci cordial labelings for several families of graphs, including \emph{Generalized Petersen graphs}, \emph{open and closed helm graphs}, \emph{joint sum graphs}, and \emph{circulant graphs of small order}. New results and examples are presented, contributing to the growing body of knowledge on graph labelings inspired by numerical sequences.
\end{abstract}

\keywords{Fibonacci Cordial, Helm, Generalized Petersen, Joint sum graph, Circulant graph.}

\section{Introduction}

The study of graph labeling has long captivated researchers within the field of graph theory. Among the many labeling concepts, a graph is said to be cordial if its vertices can be labeled with $0$s and $1$s such that the induced edge labels, based on the absolute difference of vertex labels, result in the number of vertices (or edges) labeled $0$ and those labeled $1$ differing by at most one. The concept of cordial labeling was introduced by Cahit in 1987 as a variation of graceful and harmonious labeling schemes \cite{Cahit1987}. Since then, numerous researchers have contributed to the study of various forms of cordial labelings. A comprehensive and periodically updated survey of graph labelings is maintained by Gallian \cite{Gallian2019}.

\begin{defin}
A function $f:V(G)\rightarrow \{0,1\}$ is said to be Cordial Labeling if the induced function $f^*:E(G)\rightarrow \{0,1\}$ defined by $$f^*(uv)=\vert f(u)-f(v)\vert $$
satisfies the conditions $\vert n_0-n_1\vert \le 1$, as well as $\vert \varepsilon_0-\varepsilon_1\vert \le 1$, where,  $n_i$ and $\varepsilon_i$ denote the number of vertices and edges labeled $i$, respectively, for $i = 0,1$.
\end{defin}

Over time, many variations of cordial labeling have been proposed. In 2013, Sridevi et al. \cite{SrideviNNN2013} showed that paths and cycles are Fibonacci divisor cordial graphs. Building on the idea of cordiality, Rokad and Ghodasara introduced Fibonacci cordial labeling \cite{RokadG2016}, where vertices are labeled with Fibonacci numbers rather than binary values.

\begin{defin}
The sequence $F_n$ of Fibonacci numbers is defined by the recurrence relation:
$$F_n = F_{n-1} + F_{n-2};\ \ F_0=0, F_1=F_2=1, $$
\end{defin}

\begin{defin}
An injective function $f:V(G)\rightarrow \{F_0,F_1,\cdots,F_n\}$ is said to be  Fibonacci cordial labeling if the induced function $f^*:E(G)\rightarrow \{0,1\}$ defined by $$f^*(uv)=(f(u)+f(v)) \ (\mod 2)$$
satisfies the condition $\vert \varepsilon_0-\varepsilon_1\vert \le 1$.
\end{defin}

\noindent Rokad and Ghodasara provided Fibonacci cordiality results for several families of graphs, including the Petersen graph, wheel graphs, shell graphs, bistars, and certain product graphs such as coronas. Further investigations by Rokad in 2017 \cite{Rokad2017} expanded these results to additional graph families. In \cite{MitraB2020}, we explored the Fibonacci cordiality of complete graphs, cycles, and corona products of the form $C_n \odot K_m$ for $m \leq 3$. This work was later extended to the generalized families $C_n \odot K_m$ and $C_n \odot \overline{K_m}$ in \cite{MitraB2021}. Recent contributions to this area can also be found in \cite{MitraPB2024, MitraB2024, MaderM2024, Sulayman2023}. 

In this paper, we investigate the Fibonacci cordiality of Generalized Petersen graphs, helm graphs, closed helm graphs, joint sum graphs, and finally, circulant graphs. Throughout, we denote \(\tilde{\varepsilon}:= \left| \varepsilon_1 - \varepsilon_0 \right| \), the absolute difference between the number of edges labeled $ 1$ and those labeled $0$.


\section{Main Results}
Watkins introduced the \emph{generalized Petersen graph} (abbreviated as GP) in \cite{Watkins1969}. For any integer \( n \ge 3 \) and \( k \in \{1, 2, \ldots, \lfloor (n-1)/2 \rfloor\} \), the graph \( \mathrm{GP}(n,k) \) is defined with the vertex set $ V = \{ u_i, v_i : 1 \le i \le n \} $ and the edge set $E = \{ u_i u_{i+k},\ u_i v_i,\ v_i v_{i+1} : 1 \le i \le n \}$, where the indices are taken modulo \( n \). According to this notation, $\GP(5,2)$ is the classic Petersen graph. In this paper we consider $\GP(n,1)$, and provide the Fibonacci cordial labeling for that family of graphs. 

\begin{thrm}
Generalized Petersen graph $\GP(n,1)$ is Fibonacci cordial. 
\end{thrm}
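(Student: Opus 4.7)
The plan is to produce a Fibonacci cordial labeling of $\GP(n,1)$ by prescribing only the parities of the vertex labels and then selecting concrete distinct Fibonacci numbers with those parities. The key observation is that the induced edge label $f^{*}(uv) = (f(u)+f(v)) \pmod 2$ depends only on the parities of $f(u)$ and $f(v)$, and that $F_i$ is even iff $i\equiv 0\pmod 3$. Consequently the distinct even Fibonacci values $F_{3k}$ ($k\ge 0$) and the distinct odd Fibonacci values $F_1,F_4,F_5,F_7,F_8,\dots$ each form an infinite reservoir, so injective assignments with any desired parity profile can always be drawn.

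I would handle the two cases $n$ even and $n$ odd in parallel. In both, the outer vertices $u_1,\dots,u_n$ are assigned alternating parities, with $u_i$ odd for $i$ odd and even for $i$ even. The inner vertices $v_1,\dots,v_n$ are all given a single common parity; this is the only place the two cases diverge. For $n$ even I would make every $v_i$ odd, while for $n$ odd I would make every $v_i$ even. Injectivity is then immediate from the two infinite disjoint reservoirs of distinct Fibonacci values described above.

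The remainder is a short edge count. On the outer cycle, the alternating pattern produces $n$ parity changes if $n$ is even and $n-1$ changes if $n$ is odd, the lone non-change being the wraparound edge $u_nu_1$ whose endpoints are both odd. The inner cycle contributes no label-$1$ edges since all $v_i$ share a parity. Each spoke $u_iv_i$ is labeled $1$ precisely when $u_i$ and $v_i$ differ in parity, giving $n/2$ such spokes when $n$ is even and $(n+1)/2$ when $n$ is odd. Summing gives $\varepsilon_1 = 3n/2$ in the even case and $\varepsilon_1 = (3n-1)/2$ in the odd case, so $\tilde{\varepsilon}=0$ and $\tilde{\varepsilon}=1$ respectively, confirming the cordial condition.

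The main obstacle is the odd-$n$ case: the symmetric choice of making every $v_i$ odd, the natural analogue of the even-$n$ construction, produces an imbalance of $3$ rather than $1$. The fix is the asymmetric choice of making every $v_i$ even, which shifts the spoke count from $(n-1)/2$ up to $(n+1)/2$ and exactly offsets the unavoidable label-$0$ edge $u_nu_1$ inherent to any alternating parity pattern on an odd cycle. Once this case distinction is in place, the rest of the argument is routine bookkeeping.
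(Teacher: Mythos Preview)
Your argument has a genuine gap: you treat the available Fibonacci numbers as an ``infinite reservoir,'' but the definition of Fibonacci cordial labeling restricts the codomain to $\{F_0,F_1,\ldots,F_{|V(G)|}\}$. For $\GP(n,1)$ this is $\{F_0,\ldots,F_{2n}\}$, a set of $2n+1$ indices of which only $\lfloor 2n/3\rfloor+1$ give even values (those with index divisible by $3$) and the remaining $2n-\lfloor 2n/3\rfloor$ give odd values. You cannot freely choose a parity profile; the profile must fit inside these counts.

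Your constructions violate this constraint almost everywhere. In the odd-$n$ case you label all $n$ inner vertices even and $(n-1)/2$ outer vertices even, demanding $(3n-1)/2$ even labels, whereas only about $2n/3$ are available; already for $n=3$ you need $4$ even labels but have only $F_0,F_3,F_6$. In the even-$n$ case you need $3n/2$ odd labels but only about $4n/3$ are on hand; for $n=6$ you need $9$ odd labels and only $8$ exist among $F_0,\ldots,F_{12}$. So apart from a couple of tiny values of $n$, neither construction can be realized by an injective map into the required codomain. Your edge counts are fine, but the parity patterns you chose are infeasible. The paper's proof is intricate precisely because any valid parity pattern must simultaneously balance the edge labels \emph{and} use no more than roughly $2n/3$ even Fibonacci numbers; achieving both forces the delicate case analysis on $n\bmod 3$ and $p\bmod 4$ that you see there.
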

\begin{proof}
We prove the result by constructing a Fibonacci cordial labeling for $\GP(n,1)$, considering three cases based on the residue class of $n$ modulo 3. Let $f: V(\GP(n,1))\rightarrow\{F_0, F_1, \cdots, F_{2n}\}$ be the labeling function defined differently in the three cases below.
\begin{mycases}
\case $n=3p$

For $p \equiv k\ (\mod 4)$, we define, 

\[
p_1 =
\begin{cases}
(5p-k)/4, & \text{if } k=0,1,2\\
(5p+1)/4, & \text{if } k=3
\end{cases}
\qquad
p_2 =
\begin{cases}
2p+1-p_1, & \text{if } k=2\\
2p-p_1, & \text{if } k=0,1,3
\end{cases}
\]

\noindent Define the vertex labeling $f:V(\GP(n,1))\rightarrow\{F_0, F_1, \cdots, F_{2n}\}$ as follows: 
$$
f(u_i) =
\begin{cases}
\displaystyle F_{\lfloor (3i-1)/{2} \rfloor }, & \text{for }1\le i\le n-p_2\\
\displaystyle F_{3(p_1+p_2+i-n)}, & \text{for } n+1-p_2\le i\le n-1\\
\displaystyle F_{3(p_1+p_2)}, & \text{for } i=n \text{ and } k= 0,1,3  \\
\displaystyle F_{0}, & \text{for } i=n \text{ and } k=2 \\
\end{cases}
$$

For even $i$ with $2 \le i \le 2p_1$, set $f(v_i) = F_{3i/2}$. For $2p_1 + 1 \le i \le n$, define

$$
f(v_i) =
\begin{cases}
F_{\lfloor (n+3i-1)/2\rfloor }, & \text{for } k= 0,1,3  \\
F_{n+i-2-\lceil (n-i)/2\rceil}, & \text{for } k= 2 \\
\end{cases}
$$
\noindent Finally let $f(u_{n-p_2})= F_{k}$, then for $1\le i\le  2p_1-1$ when $i$ is odd, 
$$
f(v_i) =
\begin{cases}
F_{n+i+\ell- \lfloor i/4\rfloor }, & \text{if } k\equiv 1 \ (\mod 3)  \\
\displaystyle F_{n+i+\ell- \lceil \frac{n-i}{2} \rceil }, & \text{if } k\equiv 2 \ (\mod 3)
\end{cases}
$$
\noindent where $$
\ell =
\begin{cases}\vspace{.05 in}
\Big\lfloor \frac{3p}{8}\Big \rfloor, & \text{for } k= 0, 1\\ \vspace{.05 in}
\bigg\lfloor \frac{3(p-5)}{8}\bigg \rfloor, & \text{for } k=2\\
\bigg\lfloor \frac{3(p+1)}{8}\bigg \rfloor, & \text{for } k=3
\end{cases}
$$
\noindent
If \( \ell < 0 \), we redefine it as \( \ell := \ell + 1 \); this adjustment is applied throughout the proof—including in the other cases—whenever necessary. It is evident from the above construction that the labeling assigns even Fibonacci numbers to exactly $p_1 + p_2$ vertices of $G$. The total number of edges labeled with $0$ and $1$ are respectively given by, 
\[
\varepsilon_0 = 3n - 3p_1 - p_2, \quad \varepsilon_1 = 3p_1 + p_2,
\]
So the difference 
\[
\tilde{\varepsilon} = |\varepsilon_0 - \varepsilon_1| = |6p_1 + 2p_2 - 3n|
\]
satisfies $ \tilde{\varepsilon}  \le 1$ for all $p$, which can be easily verified. Hence, $G$ is Fibonacci cordial in this case.


\case $n=3p+1$

Assume $p \equiv k\ (\mod 4)$. Define:
\[
p_1 =
\begin{cases}
\frac{5p - k}{4}, & \text{if } k = 0,1\\
\frac{5p + 2}{4}, & \text{if } k = 2\\
\frac{5p + 1}{4}, & \text{if } k = 3
\end{cases}
\qquad
p_2 =
\begin{cases}
2p - p_1, & \text{if } k = 0,1,2\\
2p + 1 - p_1, & \text{if } k = 3
\end{cases}
\]

We define the labeling $f$ as follows:

$$
f(u_i) =
\begin{cases}
\displaystyle F_{\lfloor (3i-1)/{2} \rfloor }, & \text{for }1\le i\le n-p_2\\
\displaystyle F_{3(p_1+p_2+i-n)}, & \text{for } n+1-p_2\le i\le n-1\\
\displaystyle F_{3(p_1+p_2)}, & \text{for } i=n \text{ and } k=0,1,2 \\
\displaystyle F_{0}, & \text{for } i=n \text{ and } k=3 \\
\end{cases}
$$

$f(v_i)=F_{3i/2}$ for $i$ even and $2\le i\le 2p_1$. Now for $2p_1+1\le i\le n$,
$$
f(v_i) =
\begin{cases}
F_{n+i-\lfloor (n-i)/2\rfloor }, & \text{for } k=0,1,2 \\
F_{n+i-1-\lceil (n-i)/2\rceil}, & \text{for } k=3 \\
\end{cases}
$$
\noindent Finally let $f(u_{n-p_2})= F_{k}$, then for $1\le i\le  2p_1-1$ when $i$ is odd, 
$$
f(v_i) =
\begin{cases}
F_{n+i+\ell- \lfloor i/4\rfloor }, & \text{if } k\equiv 2 \ (\mod 3) \\
\displaystyle F_{n+i+\ell- \lceil \frac{n-i}{2} \rceil }, & \text{if } k\equiv 1 \ (\mod 3) 
\end{cases}
$$

\noindent where
\[
\ell =
\begin{cases}\vspace{.05 in}
\left\lfloor \frac{3(p + 2)}{8} \right\rfloor, & \text{if } k = 0\\ \vspace{.05 in}
\left\lfloor \frac{3(p + 1)}{8} \right\rfloor, & \text{if } k = 1\\\vspace{.05 in}
\left\lfloor \frac{3(p + 4)}{8} \right\rfloor, & \text{if } k = 2\\
\left\lfloor \frac{3(p - 1)}{8} \right\rfloor, & \text{if } k = 3
\end{cases}
\]

The reader can verify the parity distribution to confirm that the edge labels are indeed balanced, as asserted.

\case $n = 3p + 2$

Assume $p \equiv k\ (\mod 4)$. Define:
\[
p_1 =
\begin{cases}
\frac{5p}{4} + 1, & \text{if } k = 0\\
\frac{5p + 3}{4}, & \text{if } k = 1\\
\frac{5p + 2}{4}, & \text{if } k = 2\\
\frac{5p}{4}, & \text{if } k = 3
\end{cases}
\qquad
p_2 =
\begin{cases}
2p + 1 - p_1, & \text{if } k = 0,1,3\\
2p + 2 - p_1, & \text{if } k = 2
\end{cases}
\]

We define the labeling $f$ as follows:

$$
f(u_i) =
\begin{cases}
\displaystyle F_{\lfloor (3i-1)/{2} \rfloor }, & \text{for }1\le i\le n-p_2\\
\displaystyle F_{3(p_1+p_2+i-n)}, & \text{for } n+1-p_2\le i\le n-1\\
\displaystyle F_{3(p_1+p_2)}, & \text{for } i=n \text{ and } k=0,1,2 \\
\displaystyle F_{0}, & \text{for } i=n \text{ and } k=3 \\
\end{cases}
$$

$f(v_i)=F_{3i/2}$ for $i$ even and $2\le i\le 2p_1$. Now for $2p_1+1\le i\le n$,
$$
f(v_i) =
\begin{cases}
F_{n+i-\lceil (n-i)/2\rceil}, & \text{for } k=0,1,3 \\
F_{n+i-2-\lfloor (n-i)/2\rfloor}, & \text{for } k=2 \\
\end{cases}
$$
\noindent Finally let $f(u_{n-p_2})= F_{k}$, then for $1\le i\le  2p_1-1$ when $i$ is odd, 
$$
f(v_i) =
\begin{cases}
F_{n+i+\ell- \lfloor i/4\rfloor }, & \text{if } k\equiv 1 \ (\mod 3) \\
\displaystyle F_{n+i+\ell- \lceil \frac{n-i}{2} \rceil }, & \text{if } k\equiv 2 \ (\mod 3) 
\end{cases}
$$

\noindent where
\[
\ell =
\begin{cases}\vspace{.05 in}
\left\lfloor \frac{3(p + 3)}{8} \right\rfloor, & \text{if } k = 0\\\vspace{.05 in}
\left\lfloor \frac{3(p + 2)}{8} \right\rfloor, & \text{if } k = 1\\\vspace{.05 in}
\left\lfloor \frac{3(p - 3)}{8} \right\rfloor, & \text{if } k = 2\\
\left\lfloor \frac{3(p + 4)}{8} \right\rfloor, & \text{if } k = 3
\end{cases}
\]

Once again, verifying the parity distribution confirms that the edge labels are balanced, thereby completing the proof.
\end{mycases}
\end{proof}

\begin{figure}[h!]
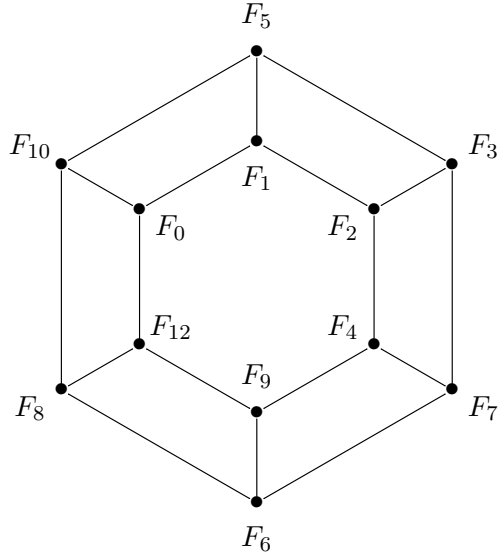

\[
\xygraph{
!{<0cm,0cm>;<0cm,1.2cm>:<-1.2cm,0cm>::}
!{(0,0);a(0)**{}?(2.5)}*{\bullet}="o0" !{(0,0);a(0)**{}?(2.9)}*{F_{5}}
!{(0,0);a(60)**{}?(2.5)}*{\bullet}="o1" !{(0,0);a(60)**{}?(2.9)}*{F_{10}}
!{(0,0);a(120)**{}?(2.5)}*{\bullet}="o2" !{(0,0);a(120)**{}?(2.9)}*{F_{8}}
!{(0,0);a(180)**{}?(2.5)}*{\bullet}="o3" !{(0,0);a(180)**{}?(2.9)}*{F_{6}}
!{(0,0);a(240)**{}?(2.5)}*{\bullet}="o4" !{(0,0);a(240)**{}?(2.9)}*{F_{7}}
!{(0,0);a(300)**{}?(2.5)}*{\bullet}="o5" !{(0,0);a(300)**{}?(2.9)}*{F_{3}}
!{(0,0);a(0)**{}?(1.5)}*{\bullet}="i0" !{(0,0);a(0)**{}?(1.1)}*{F_{1}}
!{(0,0);a(60)**{}?(1.5)}*{\bullet}="i1" !{(0,0);a(60)**{}?(1.1)}*{F_{0}}
!{(0,0);a(120)**{}?(1.5)}*{\bullet}="i2" !{(0,0);a(120)**{}?(1.1)}*{F_{12}}
!{(0,0);a(180)**{}?(1.5)}*{\bullet}="i3" !{(0,0);a(180)**{}?(1.1)}*{F_{9}}
!{(0,0);a(240)**{}?(1.5)}*{\bullet}="i4" !{(0,0);a(240)**{}?(1.1)}*{F_{4}}
!{(0,0);a(300)**{}?(1.5)}*{\bullet}="i5" !{(0,0);a(300)**{}?(1.1)}*{F_{2}}
"o0"-"o1" "o1"-"o2" "o2"-"o3" "o3"-"o4" "o4"-"o5" "o5"-"o0"
"i0"-"i1" "i1"-"i2" "i2"-"i3" "i3"-"i4" "i4"-"i5" "i5"-"i0"
"o0"-"i0" "o1"-"i1" "o2"-"i2" "o3"-"i3" "o4"-"i4" "o5"-"i5"
}
\]
\caption{Fibonacci cordial labeling of $\GP(6,1)$.}
\label{GP61}
\end{figure}

Next, we turn our attention to the \textit{helm graph} $H_n$, denoted by the graph obtained from an $n-$wheel graph by adjoining a pendant edge at each node of the cycle. We identify the vertices of $H_n$ as $V(H_n) =\{v\}\cup \{v_1,v_2,\cdots,v_n\}\cup \{u_1,u_2,\cdots,u_n\}$ where $v_i$’s are the vertices of cycle taken in clockwise and $u_i$’s are pendant vertices such that each $v_iu_i$ is a pendant edge and $v$ is the apex vertex of the cycle. Note that $\vert V(H_n)\vert =2n+1$.

\begin{thrm}
Helm graphs $H_n$ are Fibonacci cordial for $n\ge 3$. 
\end{thrm}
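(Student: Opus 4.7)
The helm $H_n$ has $2n+1$ vertices and $3n$ edges, partitioned naturally into $n$ spoke edges $vv_i$, $n$ cycle edges $v_iv_{i+1}$, and $n$ pendant edges $v_iu_i$. I would prove Fibonacci cordiality by exhibiting an explicit injection $f:V(H_n)\to\{F_0,F_1,\ldots,F_{2n+1}\}$ and counting edge parities, in the constructive style of the preceding theorem for $\GP(n,1)$. Two facts drive the argument: $F_k$ is even if and only if $k\equiv 0\pmod 3$, so the parity string of consecutive Fibonacci numbers is periodic of period three (even, odd, odd); and each pendant $u_i$ is a leaf, so the choice of $f(u_i)$ affects only the single edge $v_iu_i$, making the pendant block a local parity-balancing mechanism.

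The plan is to first set $f(v)=F_0$ and $f(v_i)=F_i$ for $1\le i\le n$, consuming $F_0,\ldots,F_n$. Under this assignment the spoke edge $vv_i$ receives label equal to the parity of $F_i$, contributing exactly $\lfloor n/3\rfloor+1$ spokes of label $0$ (those with $i\equiv 0\pmod 3$) and the rest of label $1$. The cycle edges $v_iv_{i+1}$ follow the period-three label pattern $(0,1,1)$, along with a wrap-around term from $v_nv_1$ that depends on $n\bmod 3$, again giving an explicit count. The remaining labels $F_{n+1},\ldots,F_{2n+1}$ (omitting one) would then be assigned to $u_1,\ldots,u_n$ by a permutation chosen so that the induced pendant edge labels correct the combined spoke-and-cycle imbalance to within one.

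To carry this out uniformly, I would split the argument according to $n\bmod 6$, since both the supply of even Fibonacci numbers within each block and the wrap-around parity of the cycle depend on $n$ modulo $2$ and modulo $3$. Whenever the canonical pendant permutation misses the balance condition by one edge, a single corrective swap between a cycle label and a pendant label flips exactly two edge parities and restores $|\varepsilon_0-\varepsilon_1|\le 1$, the same mechanism illustrated by the substitution $f(u_{n-p_2})=F_k$ in the preceding theorem. The principal technical obstacle will be the parity bookkeeping: verifying for each residue class that the number of even Fibonacci numbers among $F_{n+1},\ldots,F_{2n+1}$ is sufficient for the required pendant parity profile, and identifying a corrective swap that preserves injectivity whenever the naive count is off by one. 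The small cases $n=3,4,5$ would be handled by exhibiting explicit labelings in a table, since the asymptotic counting arguments are not tight there.
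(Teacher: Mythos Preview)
Your plan is genuinely different from the paper's construction. The paper does not set $f(v)=F_0$ and $f(v_i)=F_i$; instead it places a run of even Fibonacci numbers $F_0,F_3,F_6,\ldots$ on an initial block of cycle vertices, assigns the apex a specific odd label, and tunes three parameters $p_1,p_2,p_3$ (depending on $n\bmod 3$, with a further $\bmod\ 4$ refinement) so that the edge counts balance by design; the pendants then receive the leftover odd labels in a fixed pattern, and $n\in\{4,8,16\}$ are handled as exceptions. Your ``canonical cycle labeling plus pendant balancing'' is more modular and can probably be pushed through, but as written it is a plan rather than a proof, and two of its stated details are wrong.

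First, with $f(v)=F_0$ even, the spoke $vv_i$ is labeled $0$ exactly when $i\equiv 0\pmod 3$, so there are $\lfloor n/3\rfloor$ such spokes among $1\le i\le n$, not $\lfloor n/3\rfloor+1$. Second, the correction mechanism is misdescribed: in $H_n$ each cycle vertex $v_i$ has degree $4$ (apex, two cycle neighbours, its pendant), so swapping its label with a pendant label of opposite parity flips four or five edge labels, not two; the analogy with $\GP(n,1)$ fails because the inner vertices there have degree $3$. A single such swap therefore changes $\varepsilon_0-\varepsilon_1$ by an element of $\{\pm 2,\pm 6,\pm 10\}$, so a $\pm 2$ correction is still \emph{possible}, but only if a vertex with the right local parity configuration exists, which has to be checked. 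In fact the swap is likely unnecessary: by choosing which one of the $n+1$ residual labels $F_{n+1},\ldots,F_{2n+1}$ to omit (an even one versus an odd one) and then permuting the pendant assignment, the achievable pendant $\varepsilon_0$ already sweeps a full interval of consecutive integers, and a short computation in each residue class of $n\bmod 3$ shows the required value lies in that interval. Carrying out that computation is precisely the ``parity bookkeeping'' you flag as the principal obstacle, and it is the part still missing.
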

\begin{proof}
Let $G$ be the graph $H_n$. We identify $n=4, 8$, and $16$ as special cases and provide the individual Fibonacci labeling later. For the rest of the values of $n$, first we determine the values of three {\it parameters }$p_i$, $i=1,2,3$. First, we have provided the value for $p_3$ as follows,
$$
p_3 =
\begin{cases}
0, & \text{if } n \equiv 0 \ (\mod 4)\\
1, & \text{otherwise }
\end{cases}
$$
\noindent Now we calculuate the values of $p_1$ and $p_2$ based on three congruence classes of $n$. The following three cases are considered due to the fact that the number of available even Fibonacci numbers depends on the remainder of $n$ when divided by $3$.
\begin{mycases}
\case $n=3p$

We have $p_1=\lfloor \frac{p-1}{4}\rfloor $, $p_2=2p-p_1$.

\case $n=3p+1$, $p\ge 2$

In this case, if $p \equiv 1 \ (\mod 4)$ then $p_1=(p-9)/4$, otherwise $p_1=\big\lfloor \frac{p-1}{4}\big\rfloor $. Finally we have  $p_2=2p+2-p_1-p_3$.

\case $n=3p+2$

In this case we have $p_2=2p+2-p_1-p_3$, where
$$
p_1 =
\begin{cases}
1, & \text{for } p = 2\\
\frac{p-6}{4}, & \text{for } p \equiv 2 \ (\mod 4)\text{ and } p>2\\
\big\lfloor \frac{p+1}{4}\big \rfloor , & \text{otherwise }
\end{cases}
$$ 

\end{mycases}

Now the vertex labeling of $f: V (H_n)\rightarrow\{F_0,F_1,\cdots, F_{2n+1}\}$ is defined as follows:

$$
f(v_i) =
\begin{cases}
F_{3(i-1)}, & \text{for } 1\le i\le p_2 \\ 
F_{(i+n-p_2)+\lceil (i+n-p_2-5)/2\rceil}, & \text{for }  p_2+1 \le i\le n-2p_1 \\ 
\end{cases}
$$
\noindent For $n+1-2p_1\le i\le n-1 $
$$
f(v_i) =
\begin{cases}
F_{3(p_2+\lceil (i+2p_1-n)/2\rceil )}, & \text{if } n-i\equiv 1 (\mod 2) \text{\ and } p_3=1 \\
F_{3(p_2-1+\lceil (i+2p_1-n)/2\rceil)}, & \text{if } n-i\equiv 1 (\mod 2)  \text{\ and } p_3=0\\
F_{(3n+i)/2-p_1-p_2+\lceil (3n+i-2p_1-2p_2-10)/4\rceil )}, & \text{if } n-i\equiv 0 (\mod 2)
\end{cases}
$$

$$
f(u_i) =
\begin{cases}
F_{3p_2}, & \text{for } i=1 \text{ \ and } p_3=1 \\ 
F_{3n-p_1-p_2-1+\lceil (-p_1-p_2)/2\rceil}, & \text{for } i=1 \text{ \ and } p_3 =0\\ 
F_{i+\lceil (i-5)/2\rceil}, & \text{for } 2\le i\le n 
\end{cases}
$$

$$
f(v) =
\begin{cases}
F_{3n+1-p_1-p_2+\lceil (-p_1-p_2-1)/2\rceil}, &  \text{if } p_3 =0\\ 
F_{3n-p_1-p_2-1+\lceil (-p_1-p_2)/2\rceil}, & \text{if } p_3 =1 
\end{cases}
$$

Finally we provide Fibonacci cordial labeling of the special cases ($n=4,8,16$). Fibonacci cordial labeling for $H_4$ is provided in Figure \ref{helm}.

For $n=8,16$, we follow the following pattern: $f(u_i)=i+\lceil (i-2)/2\rceil $ for $1\le i\le n$ and 
$$
f(v_i) =
\begin{cases}
F_{3(i-1)}, & \text{for } 1\le i\le p_2\\ 
F_{i+n-p_2-1+\lceil (i+n-p_2)/2\rceil}, & \text{for }  p_2+1 \le i\le n  \\ 
F_{17}, & \text{if }  n=8  \\ 
F_{32}, & \text{if }  n=16  \\ 
\end{cases}
$$

\end{proof}

\begin{figure}[h!]
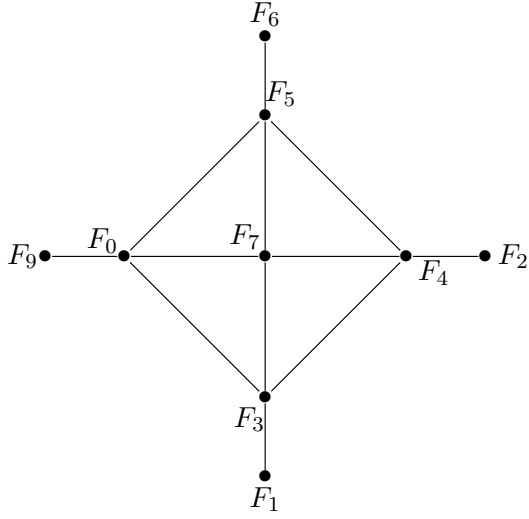

\[ \xygraph{
!{<0cm,0cm>;<0cm,0.75 cm>:<-0.75 cm,0cm>::}
!{(0,0);a(0)**{}?(0)}*{\bullet}="a" ! {(0,0);a(45)**{}?(0.5)}*{F_{7}}
!{(0,0);a(0)**{}?(2.5)}*{\bullet}="a0" !{(0,0);a(-5)**{}?(2.9)}*{F_{5}}
!{(0,0);a(90)**{}?(2.5)}*{\bullet}="a3" !{(0,0);a(85)**{}?(2.9)}*{F_{0}}
!{(0,0);a(180)**{}?(2.5)}*{\bullet}="a2" !{(0,0);a(175)**{}?(2.9)}*{F_{3}}
!{(0,0);a(270)**{}?(2.5)}*{\bullet}="a1" !{(0,0);a(265)**{}?(3)}*{F_{4}}
!{(0,0);a(0)**{}?(3.9)}*{\bullet}="b0" ! {(0,0);a(0)**{}?(4.3)}*{F_{6}}
!{(0,0);a(90)**{}?(3.9)}*{\bullet}="b3" !{(0,0);a(90)**{}?(4.3)}*{F_{9}}
!{(0,0);a(180)**{}?(3.9)}*{\bullet}="b2" !{(0,0);a(180)**{}?(4.3)}*{F_{1}}
!{(0,0);a(270)**{}?(3.9)}*{\bullet}="b1" !{(0,0);a(270)**{}?(4.4)}*{F_{2}}
"a"-"a1" "a"-"a2" "a"-"a3" "a"-"a0" 
"a0"-"a1" "a1"-"a2" "a2"-"a3" "a3"-"a0" 
"a0"-"b0" "a1"-"b1" "a2"-"b2" "a3"-"b3" 
} 
\]
\caption{Fibonacci cordial labeling of $H_{4}$ graph}
\label{helm}
\end{figure}

A closed helm $\CH_n$ (see Figure \ref{n=17FC}) is the graph obtained by taking a helm $H_n$ and adding edges between the outer pendant vertices, i.e., completing the outer circle. 
\begin{figure}[h!]
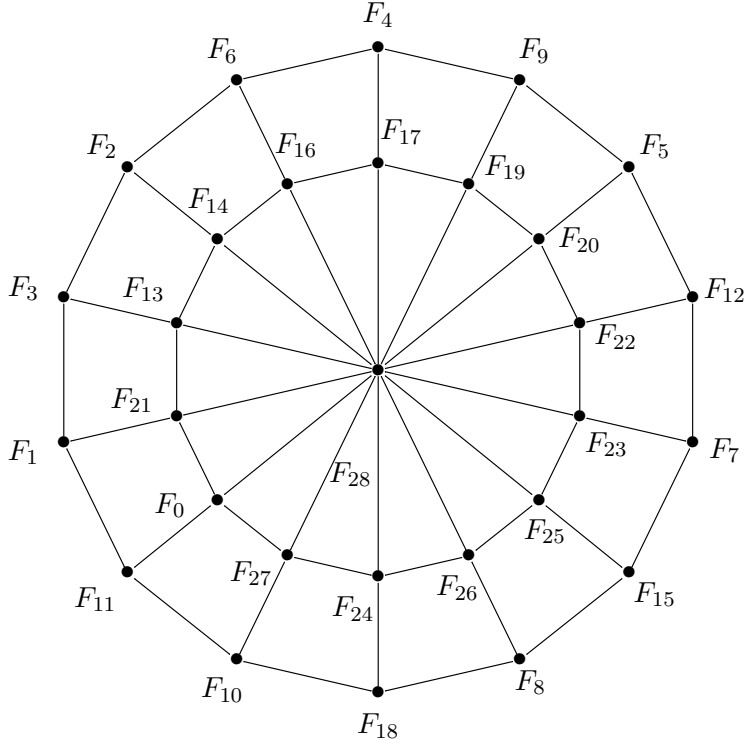

\[ \xygraph{
!{<0cm,0cm>;<0cm,1.1 cm>:<-1.1 cm,0cm>::}
!{(0,0);a(0)**{}?(0)}*{\bullet}="a14" !{(0,0);a(165)**{}?(1.3)}*{F_{28}}
!{(0,0);a(0)**{}?(2.5)}*{\bullet}="a0" !{(0,0);a(-5)**{}?(2.9)}*{F_{17}}
!{(0,0);a(26)**{}?(2.5)}*{\bullet}="a13" !{(0,0);a(20)**{}?(2.9)}*{F_{16}}
!{(0,0);a(51)**{}?(2.5)}*{\bullet}="a12" !{(0,0);a(45)**{}?(2.9)}*{F_{14}}
!{(0,0);a(77)**{}?(2.5)}*{\bullet}="a11" !{(0,0);a(71)**{}?(3)}*{F_{13}}
!{(0,0);a(103)**{}?(2.5)}*{\bullet}="a10" !{(0,0);a(97)**{}?(3)}*{F_{21}}
!{(0,0);a(129)**{}?(2.5)}*{\bullet}="a9" !{(0,0);a(123)**{}?(3)}*{F_{0}}
!{(0,0);a(154)**{}?(2.5)}*{\bullet}="a8" !{(0,0);a(148)**{}?(2.9)}*{F_{27}}
!{(0,0);a(180)**{}?(2.5)}*{\bullet}="a7" !{(0,0);a(174)**{}?(2.9)}*{F_{24}}
!{(0,0);a(206)**{}?(2.5)}*{\bullet}="a6" !{(0,0);a(200)**{}?(2.8)}*{F_{26}}
!{(0,0);a(231)**{}?(2.5)}*{\bullet}="a5" !{(0,0);a(226)**{}?(2.8)}*{F_{25}}
!{(0,0);a(257)**{}?(2.5)}*{\bullet}="a4" !{(0,0);a(252)**{}?(2.9)}*{F_{23}}
!{(0,0);a(283)**{}?(2.5)}*{\bullet}="a3" !{(0,0);a(278)**{}?(2.9)}*{F_{22}}
!{(0,0);a(309)**{}?(2.5)}*{\bullet}="a2" !{(0,0);a(303)**{}?(2.9)}*{F_{20}}
!{(0,0);a(334)**{}?(2.5)}*{\bullet}="a1" !{(0,0);a(328)**{}?(2.9)}*{F_{19}}
!{(0,0);a(0)**{}?(3.9)}*{\bullet}="b0" !{(0,0);a(0)**{}?(4.3)}*{F_{4}}
!{(0,0);a(26)**{}?(3.9)}*{\bullet}="b13" !{(0,0);a(26)**{}?(4.3)}*{F_{6}}
!{(0,0);a(51)**{}?(3.9)}*{\bullet}="b12" !{(0,0);a(51)**{}?(4.3)}*{F_{2}}
!{(0,0);a(77)**{}?(3.9)}*{\bullet}="b11" !{(0,0);a(77)**{}?(4.4)}*{F_{3}}
!{(0,0);a(103)**{}?(3.9)}*{\bullet}="b10" !{(0,0);a(103)**{}?(4.4)}*{F_{1}}
!{(0,0);a(129)**{}?(3.9)}*{\bullet}="b9" !{(0,0);a(129)**{}?(4.4)}*{F_{11}}
!{(0,0);a(154)**{}?(3.9)}*{\bullet}="b8" !{(0,0);a(154)**{}?(4.3)}*{F_{10}}
!{(0,0);a(180)**{}?(3.9)}*{\bullet}="b7" !{(0,0);a(180)**{}?(4.3)}*{F_{18}}
!{(0,0);a(206)**{}?(3.9)}*{\bullet}="b6" !{(0,0);a(206)**{}?(4.2)}*{F_{8}}
!{(0,0);a(231)**{}?(3.9)}*{\bullet}="b5" !{(0,0);a(231)**{}?(4.3)}*{F_{15}}
!{(0,0);a(257)**{}?(3.9)}*{\bullet}="b4" !{(0,0);a(257)**{}?(4.3)}*{F_{7}}
!{(0,0);a(283)**{}?(3.9)}*{\bullet}="b3" !{(0,0);a(283)**{}?(4.3)}*{F_{12}}
!{(0,0);a(309)**{}?(3.9)}*{\bullet}="b2" !{(0,0);a(309)**{}?(4.3)}*{F_{5}}
!{(0,0);a(334)**{}?(3.9)}*{\bullet}="b1" !{(0,0);a(334)**{}?(4.3)}*{F_{9}}
"a0"-"a13" "a13"-"a12" "a12"-"a11" "a11"-"a10" "a10"-"a9" "a9"-"a8" "a8"-"a7"
"a7"-"a6" "a6"-"a5" "a5"-"a4" "a4"-"a3" "a3"-"a2" "a2"-"a1"
"a1"-"a0"
"b0"-"b13" "b13"-"b12" "b12"-"b11" "b11"-"b10" "b10"-"b9" "b9"-"b8" "b8"-"b7"
"b7"-"b6" "b6"-"b5" "b5"-"b4" "b4"-"b3" "b3"-"b2" "b2"-"b1"
"b1"-"b0"
"a0"-"a14" "a1"-"a14" "a2"-"a14" "a3"-"a14" "a4"-"a14" "a5"-"a14" "a6"-"a14" "a7"-"a14" "a8"-"a14" "a9"-"a14" "a10"-"a14" "a11"-"a14" "a12"-"a14" "a13"-"a14"
"a0"-"b0" "a1"-"b1" "a2"-"b2" "a3"-"b3" "a4"-"b4" "a5"-"b5" "a6"-"b6" "a7"-"b7" 
"a8"-"b8" "a9"-"b9" "a10"-"b10" "a11"-"b11" "a12"-"b12" "a13"-"b13" 
} 
\]
\caption{Fibonacci cordial labeling of $\CH_{14}$ graph}
\label{n=17FC}
\end{figure}

\begin{thrm}
Closed Helm graphs $\CH_n$ are Fibonacci cordial for all $n\ge 3$. 
\end{thrm}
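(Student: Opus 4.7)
The plan is to extend the explicit construction used for the helm graph $H_n$ in the preceding theorem to the closed helm $\CH_n$. Since $\CH_n$ has $2n+1$ vertices and $4n$ edges (namely $n$ apex spokes, $n$ inner cycle edges, $n$ radial spokes $v_i u_i$, and $n$ new outer cycle edges), the target is $\varepsilon_0 = \varepsilon_1 = 2n$. Recall that $F_i$ is even if and only if $3 \mid i$, so $\{F_0, F_1, \ldots, F_{2n+1}\}$ contains only about one-third even Fibonacci numbers, and an edge receives label $0$ precisely when its two endpoints have Fibonacci labels of the same parity.

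I would proceed by a case analysis on $n \bmod 3$, paralleling the $H_n$ proof, and subdivide each case further on $n \bmod 4$ so that the analogues of the parameters $p_1, p_2, p_3$ remain integer-valued. Within each case, I would reuse the helm labeling on the inner cycle $v_1,\ldots,v_n$ and the apex $v$, since that already balances the apex--inner-cycle edges and the inner-cycle edges. The new work lies in relabeling the outer vertices $u_1,\ldots,u_n$, which in the helm graph were pendants but now induce an additional cycle that must itself contribute balanced edges. The goal is to place the remaining even labels among the $u_i$ so that their parity sequence around the outer cycle is nearly alternating (yielding roughly $n/2$ outer-cycle edges of each type), while the positions of the ``defects'' (consecutive same-parity outer vertices) are tuned so that the $n$ radial spokes $v_iu_i$ absorb the residual imbalance.

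The main obstacle is the simultaneous balancing of all four edge classes under a severely skewed parity budget: once the inner cycle has consumed most of the available even labels, the outer-cycle pattern has little flexibility, and the defect positions must align with the parity pattern of the inner cycle so that both the outer cycle count and the radial spoke count come out correctly. As in the helm case, a handful of small or boundary values of $n$ (those for which the generic floor/ceiling formulas collapse or produce negative parameters) are expected to require bespoke labelings, in the spirit of Figure \ref{n=17FC}, which already illustrates the intended pattern for $n=14$. The final verification $|\varepsilon_0 - \varepsilon_1|\le 1$ in each case then reduces to a routine count of same-parity adjacencies along the two cycles plus the two spoke families, analogous to the verification at the end of Theorem on $H_n$.
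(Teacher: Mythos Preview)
Your proposal is a strategy outline, not a proof: no explicit labeling is ever written down and no edge count is actually carried out, so as it stands it does not establish the theorem. More importantly, the central step of your plan rests on an unjustified claim. You assert that reusing the $H_n$ labeling on the apex and the inner cycle ``already balances the apex--inner-cycle edges and the inner-cycle edges,'' but the helm construction was designed to achieve $|\varepsilon_0-\varepsilon_1|\le 1$ over all $3n$ edges of $H_n$ simultaneously, not over the $2n$ apex-plus-inner-cycle edges alone. There is no reason those two edge classes should be separately balanced, and in fact inspection of the $H_n$ formulas shows the even labels are concentrated on the $v_i$ in a way tuned to the pendant contributions, not to an additional outer cycle. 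Once that premise fails, your remaining degrees of freedom (permuting labels among the $u_i$) are unlikely to suffice, because the multiset of parities on the outer ring is already fixed by whatever labels are left over.

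For comparison, the paper does \emph{not} recycle the $H_n$ labeling at all. It builds a fresh labeling for $\CH_n$ from scratch in three cases on $n\bmod 3$, with new parameters $p_1,p_2,p_3$ (different from the helm parameters) chosen so that the even Fibonacci labels are distributed in an alternating pattern on the outer cycle $u_1,\ldots,u_n$ and in blocks on the inner cycle $v_1,\ldots,v_n$; the apex receives $F_{2n+1}$ or $F_{2n}$. The verification in Case~1 is then the direct computation $\varepsilon_1=3p_1+4p_2+2p_3$ and $\tilde\varepsilon=|2p_1+4p_2-4p|=0$ from the explicit parameter values, with the other cases analogous. If you want to salvage your approach you would need either to prove the subsidiary balance claim for the inherited inner labeling or, more realistically, abandon the reuse idea and construct the labeling directly as the paper does.
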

\begin{proof}
Let $G$ be the graph $\CH_n$. Due to the same reason as mentioned before, we consider three cases based on the value of $n$. Note that \( |V(\CH_n)| = 2n + 1 \) and \( |E(\CH_n)| = 4n \), which is even. Therefore, any Fibonacci cordial labeling of \( \CH_n \) must satisfy \( \varepsilon_0 = \varepsilon_1 \). The vertex labeling of $f: V (\CH_n)\rightarrow\{F_0,F_1,\cdots, F_{2n+1}\}$ is defined as follows:
\begin{mycases}
\case \textbf{$n=3p$}
$$
f(u_i) =
\begin{cases}
F_{3i/2}, & \text{for } 1\le i\le 2p_1 \text{ and } i \equiv 0 \ (\mod 2)\\
F_{\lfloor (3i+1)/4\rfloor}, & \text{for } 1\le i\le 2p_1 \text{ and } i \equiv 1 \ (\mod 2)\\
F_{\lceil 3(i-p_1)/2-1 \rceil}, & \text{for } 2p_1+1\le i\le n
\end{cases}
$$
$$
f(v_i) =
\begin{cases}
F_{3(2p_1+i+1)/2}, & \text{for } 1\le i\le 2p_2 \text{ and } i \equiv 1 \ (\mod 2)\\
F_{\lceil 3(j/2+n-p_1)/2\rceil-1}, & \text{for } 1\le i\le 2p_2 \text{ and } i \equiv 0 \ (\mod 2)\\
\displaystyle F_{n+i+p_3 + \lfloor (i+p_3+1-n)/2 \rfloor-1}, & \text{for } 2p_2+1\le i\le n-p_3\\
\displaystyle F_{[2n- 3(n-i)]}, & \text{for } n-p_3+1\le i\le n
\end{cases}
$$
\noindent where $p_2=p_3=\lceil p/2 \rceil$ and $p_1=2(p-\lceil p/2 \rceil)$, and finally $f(v)=F_{2n+1}$. It is clear that under the above labeling we get $\varepsilon_0=4n-3p_1-4p_2-2p_3$ and $\varepsilon_1=3p_1+4p_2+2p_3$. However, $p_1+p_2+p_3=2p$, hence $\tilde{\varepsilon}=\vert 6p_1+8p_2+4p_3-4n\vert = \vert 2p_1+4p_2-4p \vert = 0$, which shows that $\CH_n$ is Fibonacci cordial under the above labeling. Next, we provide the labeling for the rest of the cases for $n$, but we skip the proof as those are very similar to the present one.

\case \textbf{$n=3p+1$}

$$
f(u_i) =
\begin{cases}
F_{3i/2}, & \text{for } 1\le i\le 2p_1 \text{ and } i \equiv 0 \ (\mod 2)\\
F_{\lfloor (3i+1)/4\rfloor}, & \text{for } 1\le i\le 2p_1 \text{ and } i \equiv 1 \ (\mod 2)\\
F_{\lceil 3(i-p_1)/2-1 \rceil}, & \text{for } 2p_1+1\le i\le n
\end{cases}
$$
$$
f(v_i) =
\begin{cases}
F_{3(2p_1+i+1)/2}, & \text{for } 1\le i\le 2p_2 \text{ and } i \equiv 1 \ (\mod 2)\\
F_{\lceil 3(j/2+n-p_1)/2\rceil-1}, & \text{for } 1\le i\le 2p_2 \text{ and } i \equiv 0 \ (\mod 2)\\
\displaystyle F_{n+i+p_3 + \lfloor (i+p_3-n)/2 \rfloor-1}, & \text{for } 2p_2+1\le i\le n-p_3\\
\displaystyle F_{2n+1- 3(n-i)}, & \text{for } n-p_3+1\le i\le n
\end{cases}
$$

\noindent where $p_2=\lfloor p/2 \rfloor$, $p_3=p_2+1$, $p_1=2p+1-p_2-p_3$, and finally $f(v)=F_{2n}$.

\case \textbf{$n=3p+2$}

$$
f(u_i) =
\begin{cases}
F_{3i/2}, & \text{for } 1\le i\le 2p_1 \text{ and } i \equiv 0 \ (\mod 2)\\
F_{\lfloor (3i+1)/4\rfloor}, & \text{for } 1\le i\le 2p_1 \text{ and } i \equiv 1 \ (\mod 2)\\
F_{\lceil 3(i-p_1)/2-1 \rceil}, & \text{for } 2p_1+1\le i\le n
\end{cases}
$$
$$
f(v_i) =
\begin{cases}
F_{3(2p_1+i+1)/2}, & \text{for } 1\le i\le 2p_2 \text{ and } i \equiv 1 \ (\mod 2)\\
F_{\lceil 3(j/2+n-p_1)/2\rceil-1}, & \text{for } 1\le i\le 2p_2 \text{ and } i \equiv 0 \ (\mod 2)\\
\displaystyle F_{n+i+p_3 + \lfloor (i+p_3+1-n)/2 \rfloor-1}, & \text{for } 2p_2+1\le i\le n-p_3\\
\displaystyle F_{2n-1- 3(n-i-2)}, & \text{for } n-p_3+1\le i\le n-1\\
\displaystyle F_{0}, & \text{for } i=n
\end{cases}
$$

\noindent where $p_2 = \lfloor p/2 \rfloor$, $p_3=p_2+2$, $p_1=2(p-p_2)$, and finally $f(v)=F_{2n}$.
\end{mycases}
\end{proof}

Next, we introduce the concept of the joint sum of two graphs, as given below. 
\begin{defin}
Joint sum $G_1 \boxplus G_2$ of two graphs $G_1$ and $G_2$ is the graph obtained by connecting a vertex of $G_1$ with a vertex of $G_2$.
\end{defin}

\begin{thrm}
$C_m  \boxplus P_n$ is Fibonacci cordial.
\end{thrm}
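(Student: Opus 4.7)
Realize $C_m\boxplus P_n$ by taking $V(C_m)=\{v_1,\ldots,v_m\}$ with cyclic edges $v_iv_{i+1}$ and $V(P_n)=\{u_1,\ldots,u_n\}$ with path edges $u_ju_{j+1}$, then adding the joining edge $v_1u_1$. The graph has $m+n$ vertices and $m+n$ edges, so the cordiality condition $\tilde\varepsilon\le 1$ is equivalent to requiring $\varepsilon_1\in\{\lfloor(m+n)/2\rfloor,\lceil(m+n)/2\rceil\}$.

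Since $F_k$ is even iff $3\mid k$, the label set $\{F_0,\ldots,F_{m+n-1}\}$ contains exactly $E:=\lfloor(m+n-1)/3\rfloor+1$ even entries and $O:=m+n-E$ odd ones, and this split is forced on any injective assignment. The induced label on an edge $uv$ depends only on the parities of $f(u)$ and $f(v)$, so the construction reduces to choosing a set $S\subseteq V(C_m\boxplus P_n)$ with $|S|=E$ whose associated bipartition has cut size within $1$ of $(m+n)/2$; once such an $S$ is fixed, the $E$ even Fibonacci numbers can be placed arbitrarily in $S$ and the $O$ odd ones arbitrarily in $V\setminus S$.

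My plan is to build $S$ as a disjoint union of maximal runs on the cycle and on the path. If $S\cap V(C_m)$ is a proper nonempty subset consisting of $s$ arcs, the cycle contributes exactly $2s$ to the cut; if $S\cap V(P_n)$ consists of $t$ runs, the path contributes $t$ or $t-1$ boundary edges; and the joining edge $v_1u_1$ adds $0$ or $1$ more, according to whether $v_1$ and $u_1$ lie on the same side of $S$. Because moving a single boundary vertex between $S$ and $V\setminus S$ changes $|S|$ by one and the cut size by an even amount in $\{-2,0,2\}$, once a \emph{shape} (meaning a choice of $s$ and $t$) is fixed one can slide block lengths to reach the admissible $|S|=E$ without overshooting the target cut. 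I will organize the argument into cases indexed by $(m+n)\bmod 3$ (which fixes $E$) together with the parities of $m$ and $n$, and in each case exhibit an explicit block pattern—analogous in spirit to the layered constructions used for $\GP(n,1)$ and the helm graphs above—whose cut size hits $\lfloor(m+n)/2\rfloor$ or $\lceil(m+n)/2\rceil$.

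The main obstacle I anticipate is the parity match at the joining edge $v_1u_1$: because single-vertex swaps change the cut by an even number, hitting $\lfloor(m+n)/2\rfloor$ when $m+n$ is odd requires that the initial block layout already have cut of the correct parity. I expect this to force a split into the two subcases "$v_1$ and $u_1$ same color" versus "$v_1$ and $u_1$ different colors" within each main case, and in each subcase a short counting identity must be checked. A handful of boundary values of $m$ or $n$ (for instance $m=3$ or very small $n$, where there may not be enough vertices on one side to absorb all evens or all odds) may also need ad-hoc labelings exhibited directly, in the same spirit as the special cases $n\in\{4,8,16\}$ treated separately in the helm theorem.
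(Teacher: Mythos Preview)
Your approach is genuinely different from the paper's. The paper simply writes down explicit closed-form labelings $f(u_i)$ and $f(v_i)$ indexed by residues modulo $12$ (with a further split on $m\bmod 4$) and leaves the verification that $\tilde\varepsilon\le 1$ to the reader. You instead reduce to a purely combinatorial question---find a $2$-coloring of $V(C_m\boxplus P_n)$ with a prescribed color-class size and cut size within $1$ of $(m+n)/2$---and then propose to realize the coloring by blocks. The reduction is valid and more conceptual; it explains \emph{why} a labeling exists rather than merely exhibiting one, and it would transfer to other joint sums with little change.

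Two details need repair before the plan can be executed. First, the codomain of $f$ is $\{F_0,\ldots,F_{m+n}\}$, not $\{F_0,\ldots,F_{m+n-1}\}$: there are $m+n+1$ available labels for $m+n$ vertices, so exactly one label is omitted, and you may \emph{choose} whether that omitted label is even or odd. Hence $|S|$ is not forced; it may be either $\lfloor(m+n)/3\rfloor+1$ or $\lfloor(m+n)/3\rfloor$, and this extra degree of freedom dissolves most of the parity worry you raise at the end. Second, your path cut count is off: if $S\cap V(P_n)$ consists of $t$ maximal intervals, the path contributes between $2t-2$ and $2t$ cut edges (depending on whether the endpoints lie in $S$), not ``$t$ or $t-1$''. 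Relatedly, the claim that single-vertex swaps always change the cut by an even amount fails at the degree-$3$ vertex $v_1$ and the degree-$1$ endpoint $u_n$; this is actually good news, since it means the cut parity is adjustable. With these corrections the block-sliding argument should go through, and the anticipated ad-hoc small cases are likely unnecessary.
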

\begin{proof}
We define the graph $C_m \boxplus P_n$ as follows :
$V(C_m \boxplus P_n)= \{ u_1, u_2, \cdots, u_m \} \cup \{ v_1, v_2, \cdots, v_n \}$, where $ \{ u_1, u_2, \cdots, u_m \} $ are the vertices of the cycle and $ \{ v_1, v_2, \cdots, v_n \} $ are the vertices of the path.  The edges of the cycle are given by  $ u_iu_{i+1 (\mod m)}$ for $ 1 \le i \le m $ 
 and $ v_jv_{j+1}$ for $ 1 \le i \le n-1 $ construct the edges of the path and the edge $ u_mv_1 $ connects the cycle and the path (as described in Figure \ref{jointsumcycle}). The following vertex labeling generates the Fibonacci Cordial Labeling:
$$
f(u_i) =
\begin{cases}
F_{0}, & \text{for }j=m\\
F_{i+1}, & \text{for } i \equiv 6,8,11 \ (\mod 12)\\
F_{i-1}, & \text{for } i \equiv 0,7,9 \ (\mod 12)\\
F_{i}, & \text{otherwise }
\end{cases}
$$ 
for all $ m $  when $ m \not \equiv 2(\mod 4)   $. Otherwise, for $ m \equiv 2(\mod 4)$,

$$
f(v_i) =
\begin{cases}
F_{i+m}, & \text{for } i \equiv (7-m), (9-m), (12-m) \ (\mod 12)\\
F_{i+m-2}, & \text{for } i \equiv (8-m), (10-m), (1-m) \ (\mod 12)\\
F_{i+m-1}, & \text{otherwise } 
\end{cases}
$$ 
\end{proof}

\begin{figure}[h!]
\[ \xygraph{
!{<0cm,0cm>;<.8 cm,0cm>:<0cm,.8 cm>::}
!{(-1,-1)}*{\bullet}="a0" !{(-0.2,-1.2) }*{F_0}
!{(-1,1)}*{\bullet}="a6" !{(-1,1.4)}*{F_5}
!{(1,1)}*{\bullet}="a5" !{(1,1.4)}*{F_7}
!{(-3,1)}*{\bullet}="a7" !{(-3,1.4)}*{F_4}
!{(3,1)}*{\bullet}="a4" !{(3,1.4)}*{F_8}
!{(-5,1)}*{\bullet}="a8" !{(-5,1.4)}*{F_{3}}
!{(5,1)}*{\bullet}="a3" !{(5,1.4)}*{F_6}
!{(-7,1)}*{\bullet}="a9" !{(-7,1.4)}*{F_2}
!{(7,1)}*{\bullet}="a2" !{(7,1.4)}*{F_9}
!{(-9,1)}*{\bullet}="a10" !{(-9,1.4)}*{F_1}
!{(-1,-2)}*{\bullet}="b1" !{(-1,-2.5) }*{F_{10}}
!{(1,-2)}*{\bullet}="b2" !{(1,-2.5) }*{F_{12}}
!{(3,-2)}*{\bullet}="b3" !{(3,-2.5) }*{F_{11}}
!{(5,-2)}*{\bullet}="b4" !{(5,-2.5) }*{F_{13}}
!{(7,-2)}*{\bullet}="b5" !{(7.4,-2.5) }*{F_{14}}
!{(7,-4)}*{\bullet}="b6" !{(7,-4.5) }*{F_{15}}
!{(5,-4)}*{\bullet}="b7" !{(5,-4.5) }*{F_{16}}
!{(3,-4)}*{\bullet}="b8" !{(3,-4.5) }*{F_{17}}
!{(1,-4)}*{\bullet}="b9" !{(1,-4.5) }*{F_{19}}
!{(-1,-4)}*{\bullet}="b10" !{(-1,-4.5) }*{F_{18}}
"a6"-"a7" "a2"-"a3" "a3"-"a4" "a4"-"a5" "a5"-"a6" "a6"-"a7"  "a7"-"a8"  "a8"-"a9"  "a9"-"a10"  "a10"-"a2" 
"a0"-"a2"
"a0"-"a10" 
"a0"-"b1" "b1"-"b2" "b2"-"b3" "b3"-"b4" "b4"-"b5" "b5"-"b6" "b6"-"b7" 
"b7"-"b8"  "b8"-"b9"  "b9"-"b10"
} \]
\caption{Fibonacci Cordial Labeling of joint sum graph $C_{10}  \boxplus P_{10}$}
\label{jointsumcycle}
\end{figure}

\begin{figure}[h!]
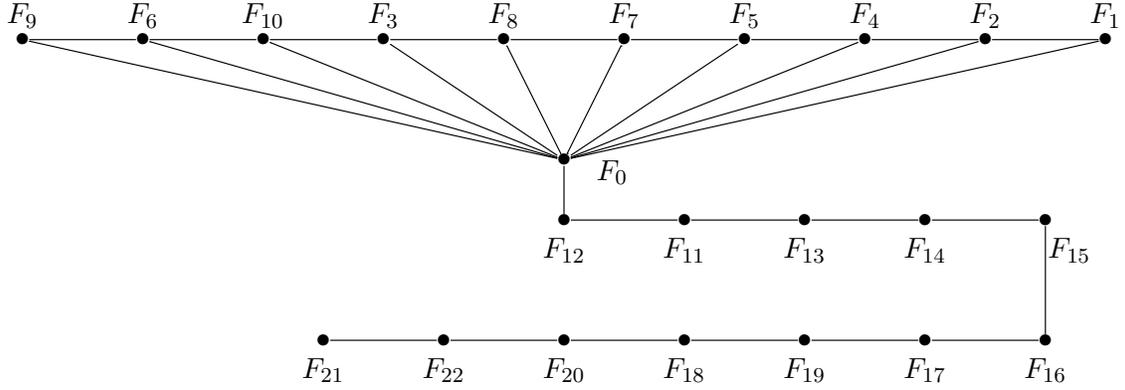

\[ \xygraph{
!{<0cm,0cm>;<.8 cm,0cm>:<0cm,.8 cm>::}
!{(0,-1)}*{\bullet}="a0" !{(0.8,-1.2) }*{F_0}
!{(-1,1)}*{\bullet}="a6" !{(-1,1.4)}*{F_8}
!{(1,1)}*{\bullet}="a5" !{(1,1.4)}*{F_7}
!{(-3,1)}*{\bullet}="a7" !{(-3,1.4)}*{F_3}
!{(3,1)}*{\bullet}="a4" !{(3,1.4)}*{F_5}
!{(-5,1)}*{\bullet}="a8" !{(-5,1.4)}*{F_{10}}
!{(5,1)}*{\bullet}="a3" !{(5,1.4)}*{F_4}
!{(-7,1)}*{\bullet}="a9" !{(-7,1.4)}*{F_6}
!{(7,1)}*{\bullet}="a2" !{(7,1.4)}*{F_2}
!{(-9,1)}*{\bullet}="a10" !{(-9,1.4)}*{F_9}
!{(9,1)}*{\bullet}="a1" !{(9,1.4)}*{F_1}
!{(0,-2)}*{\bullet}="b1" !{(0,-2.5) }*{F_{12}}
!{(2,-2)}*{\bullet}="b2" !{(2,-2.5) }*{F_{11}}
!{(4,-2)}*{\bullet}="b3" !{(4,-2.5) }*{F_{13}}
!{(6,-2)}*{\bullet}="b4" !{(6,-2.5) }*{F_{14}}
!{(8,-2)}*{\bullet}="b5" !{(8.4,-2.5) }*{F_{15}}
!{(8,-4)}*{\bullet}="b6" !{(8,-4.5) }*{F_{16}}
!{(6,-4)}*{\bullet}="b7" !{(6,-4.5) }*{F_{17}}
!{(4,-4)}*{\bullet}="b8" !{(4,-4.5) }*{F_{19}}
!{(2,-4)}*{\bullet}="b9" !{(2,-4.5) }*{F_{18}}
!{(0,-4)}*{\bullet}="b10" !{(0,-4.5) }*{F_{20}}
!{(-2,-4)}*{\bullet}="b11" !{(-2,-4.5) }*{F_{22}}
!{(-4,-4)}*{\bullet}="b12" !{(-4,-4.5) }*{F_{21}}
"a1"-"a2" "a2"-"a3" "a3"-"a4" "a4"-"a5" "a5"-"a6" "a6"-"a7" 
"a7"-"a8"  "a8"-"a9"  "a9"-"a10"  
"a0"-"a1" "a0"-"a2" "a0"-"a3" "a0"-"a4" "a0"-"a5" "a0"-"a6" "a0"-"a7" 
"a0"-"a8" "a0"-"a9" "a0"-"a10" 
"a0"-"b1" "b1"-"b2" "b2"-"b3" "b3"-"b4" "b4"-"b5" "b5"-"b6" "b6"-"b7" 
"b7"-"b8"  "b8"-"b9"  "b9"-"b10" "b10"-"b11" "b11"-"b12" 
} \]
\caption{Fibonacci cordial labeling of joint sum graph $F_{10}  \boxplus P_{12}$}
\label{jointsum}
\end{figure}

\begin{thrm}
$F_m  \boxplus P_n$ is Fibonacci cordial.
\end{thrm}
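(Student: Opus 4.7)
The plan is to construct an explicit Fibonacci cordial labeling $f\colon V(F_m \boxplus P_n) \rightarrow \{F_0, F_1, \ldots, F_{m+n}\}$ in the same constructive spirit as the preceding theorem for $C_m \boxplus P_n$. Denote the vertex set by $\{a_0, a_1, \ldots, a_m\} \cup \{b_1, b_2, \ldots, b_n\}$, where $a_0$ is the apex of the fan adjacent to every $a_i$ for $1 \le i \le m$, the vertices $a_1, \ldots, a_m$ form a path, $b_1, \ldots, b_n$ form a path, and $a_0 b_1$ is the joining edge. A direct count gives $|E(F_m \boxplus P_n)| = (m-1) + m + (n-1) + 1 = 2m + n - 1$, so Fibonacci cordiality reduces to arranging the labels so that $|\varepsilon_0 - \varepsilon_1| \le 1$.

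The central observation is that $F_i$ is even precisely when $3 \mid i$, so $\{F_0, F_1, \ldots, F_{m+n}\}$ contains exactly $\lfloor (m+n)/3 \rfloor + 1$ even labels. I would fix $f(a_0) = F_0$, making the apex even; then the parity of each of the $m$ spoke edges $a_0 a_i$ and of the join edge $a_0 b_1$ is pinned down entirely by the parity of the label at the opposite endpoint. In particular, the spokes together contribute one edge of label $1$ for every $a_i$ receiving an odd Fibonacci label, which immediately tells us how the even labels must be distributed between $\{a_1, \ldots, a_m\}$ and $\{b_1, \ldots, b_n\}$.

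Next I would partition the remaining labels $\{F_1, F_2, \ldots, F_{m+n}\}$ into a block destined for the fan vertices and a block destined for the path vertices so that the required parity split is achieved, and order each block along its respective path so that the number of parity changes along $a_1 a_2 \cdots a_m$ and along $b_1 b_2 \cdots b_n$ absorbs the remaining discrepancy. Following the template of the previous theorem, I expect the explicit formula to split into cases by the residue of $m$ modulo $12$ (with small sub-cases in $n$), together with a handful of exceptional small values treated separately in the spirit of Figure \ref{jointsum}. The labeling on $F_{10} \boxplus P_{12}$ displayed there provides the natural prototype from which the general formula is read off.

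The principal obstacle is the rigidity imposed by the spokes: the $m$ spoke edges have their labels fully determined by the parities of $f(a_1), \ldots, f(a_m)$, and these same parities must simultaneously respect the ordering along the fan-path $a_1 a_2 \cdots a_m$, so one cannot independently balance the spokes, the fan-path, the join, and the $P_n$-path. The partition of labels must therefore be calibrated so that the parity deficits on the two paths cancel out exactly once the spoke contribution is fixed. Checking that the required even/odd split is realizable within $\{F_0, \ldots, F_{m+n}\}$ for every residue class of $m$, and then verifying $|\varepsilon_0 - \varepsilon_1| \le 1$ from the explicit case-wise formulas, becomes a mechanical parity count that can be completed residue class by residue class.
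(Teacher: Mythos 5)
Your setup matches the paper's: the same decomposition of the vertex set (apex, fan path, $P_n$), the same opening move $f(\text{apex})=F_0$, the same key observation that $F_i$ is even exactly when $3\mid i$, and the same general plan of distributing the even-indexed labels between the fan and the path and then ordering them within each path, with a residue-class case analysis (the paper splits on $m \bmod 6$ with sub-rules modulo $12$ along $P_n$, rather than on $m \bmod 12$). So the route is not different; the problem is that the route is never actually travelled.

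The genuine gap is that no labeling is ever produced and no verification is given that the balance $|\varepsilon_0-\varepsilon_1|\le 1$ can always be achieved. Note that the total number of even labels is not a free parameter: since $f$ must use every label in $\{F_0,\dots,F_{m+n+1}\}$ (up to the $F_1/F_2$ convention), the count of even-labelled vertices is forced, and the only remaining freedom is the split $s_1/s_2$ of the evens between $\{a_1,\dots,a_m\}$ and $\{b_1,\dots,b_n\}$ and their placement along the two paths. One then has $\varepsilon_1=(m-s_1)+c_{\mathrm{fan}}+\mathrm{par}(f(b_1))+c_{P_n}$, where $c_{\mathrm{fan}},c_{P_n}$ are the numbers of parity changes along the two paths, and the proof must show that this quantity can be made equal to $\lfloor (2m+n-1)/2\rfloor$ or $\lceil (2m+n-1)/2\rceil$ for \emph{every} pair $(m,n)$ — including the constrained regimes where $s_2$ cannot exceed $n$ or where $c_{\mathrm{fan}}\le 2\min(s_1,m-s_1)+O(1)$ bites. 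You identify exactly this interaction (spokes versus fan-path versus $P_n$) as the ``principal obstacle,'' but then assert without argument that it reduces to a mechanical parity count. That assertion is the theorem; the paper's proof consists precisely of the explicit formulas (the parameters $p_1,p_2$ and the six cases $m=6p,\dots,6p+5$ with the residue rules for $f(v_j)$) that realize the required split and ordering, together with the resulting edge-count check. Without either such formulas or a general feasibility argument covering all residue classes, the proposal is an outline of the paper's strategy rather than a proof.
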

\begin{proof}
Let $\{u,u_1,u_2,\cdots,u_m\}$ and $\{v_1,v_2,\cdots,v_n\}$ be the vertices of $F_m$ and $P_n$ ($u$ being the apex vertex of $F_n$). We obtain the graph $F_m  \boxplus P_n$ by connecting $u$ and $v_1$. We define $f:V(F_m  \boxplus P_n) \rightarrow \{F_0,F_1,\cdots, F_{m+n+1}\}$ as follows: $f(u)=F_0$ and

$$
f(u_j) =
\begin{cases}
\displaystyle F_{j+\lceil \frac{j-2}{2}\rceil }, & \text{for } 1\le j\le p_1+1\\
\displaystyle F_{3(j-p_1)/2}, & \text{for } p_1+2 \le j\le p_1+2p_2 \text{\ and $j$ is of the same parity as $p_1$ }\\
\displaystyle F_{3(j-p_2-p_1)}, & \text{for } p_1+2p_2+1\le j\le m\\ 
\displaystyle F_{\lfloor \frac{3(j+p_1+3)}{2}\rfloor -2}, & \text{for } p_1+3 \le j\le p_1+2p_2-1 \text{\ and $j$ is of the opposite parity as $p_1$ }
\end{cases}
$$
\noindent where we obtain the values of $p_1$ and $p_2$ from below cases.

\case $m=6p$

$p_1=3p$, $p_2=p$, and 
$
f(v_j) =
\begin{cases}
\displaystyle F_{j+n+1}, & \text{for } j\equiv 6,9,10 \ (\mod 12)\\
\displaystyle F_{j+n-1}, & \text{for } j\equiv 7 \ (\mod 12)\\
\displaystyle F_{j+n-2}, & \text{for } j\equiv 11 \ (\mod 12)\\
\displaystyle F_{j+n}, & \text{otherwise } 
\end{cases}
$

\case $m=6p+1$

$p_1=3p+1$, $p_2=p$, and 
$
f(v_j) =
\begin{cases}
\displaystyle F_{j+n+1}, & \text{for } j\equiv 2,4,7 \ (\mod 12)\\
\displaystyle F_{j+n-1}, & \text{for } j\equiv 3,5,8 \ (\mod 12)\\
\displaystyle F_{j+n}, & \text{otherwise } 
\end{cases}
$

\case $m=6p+2$

$p_1=3p+2$, $p_2=p$, and 
$
f(v_j) =
\begin{cases}
\displaystyle F_{j+n-2}, & \text{for } j\equiv 3 \ (\mod 12)\\
\displaystyle F_{j+n-1}, & \text{for } j\equiv 11 \ (\mod 12)\\
\displaystyle F_{j+n+1}, & \text{for } j\equiv 1,2,10 \ (\mod 12)\\
\displaystyle F_{j+n}, & \text{otherwise } 
\end{cases}
$

\case $m=6p+3$

$p_1=3p+3$, $p_2=p$, and 
$
f(v_j) =
\begin{cases}
\displaystyle F_{j+n-2}, & \text{for } j\equiv 2,11 \ (\mod 12)\\
\displaystyle F_{j+n-1}, & \text{for } j\equiv 7 \ (\mod 12)\\
\displaystyle F_{j+n}, & \text{for } j\equiv 3,4,5,8 \ (\mod 12)\\
\displaystyle F_{j+n+1}, & \text{otherwise } 
\end{cases}
$

\case $m=6p+4$

$p_1=3p+2$, $p_2=p+1$, and 
$
f(v_j) =
\begin{cases}
\displaystyle F_{j+n-1}, & \text{for } j\equiv 0,3,10 \ (\mod 12)\\
\displaystyle F_{j+n+1}, & \text{for } j\equiv 1,4,11 \ (\mod 12)\\
\displaystyle F_{j+n}, & \text{otherwise } 
\end{cases}
$

\case $m=6p+5$

$p_1=3p+2$, $p_2=p+1x$, and 
$
f(v_j) =
\begin{cases}
\displaystyle F_{j+n+1}, & \text{for } j\equiv 0,3,10 \ (\mod 12)\\
\displaystyle F_{j+n-1}, & \text{for } j\equiv 1,4,11 \ (\mod 12)\\
\displaystyle F_{j+n}, & \text{otherwise } 
\end{cases}
$

\end{proof}

Finally, we turn our attention to circulant graphs, a distinguished class of Cayley graphs renowned for their rich symmetry and enduring appeal among mathematicians. Circulant networks naturally generalize double loop networks, first introduced by Wong and Coppersmith \cite{WongC1974}. For decades, circulant graphs have played a significant role in the design of computer and telecommunication networks, valued for their optimal fault-tolerance and efficient routing properties \cite{BoeschW1985}.

\begin{defin}\label{circulant}
Let $\mathbb{Z}_n$ be a cyclic group and $S\subset \mathbb{Z}_n$ such that $\{0\}\notin S$. Define a graph $\Gamma = \Gamma(n,S)$ by $V(\Gamma) = \mathbb{Z}_n$ and $E(\Gamma) =\{(u,v): v-u\in S\}$. Such a graph is a {\bf \mathversion{bold}circulant graph of order $n$} with connection set $S$. Note that $S = S^{-1} = \{-s:s\in S\}$ for circulant graphs.
\end{defin}

\noindent We focus our work mainly on ``small" connection sets. Note that $\vert S \vert \le n-1$, since $\{0\}\notin S$ (no loops). Also when $\vert S \vert = n-1$, then $\Gamma$ becomes complete graph, and it is well known that $\Gamma(n,\{1,2,\cdots,n-1\})$ is not Fibonacci cordial, except $n=4,6,7,9,11,18,22$ (see \cite{MitraB2020}). Let us denote $\tilde{\varepsilon}_f=\vert \varepsilon_f(0)-\varepsilon_f(1)\vert $ and $\tilde{\varepsilon}_g=\vert \varepsilon_g(0)-\varepsilon_g(1)\vert $. We begin this section with the following result.

\begin{lem}\label{cycle}
For any two injective Fibonacci labelings $f$ and $g$ on the circulant graph $\Gamma(n,\pm\{1, 2, \ldots, j\})$, we have $\tilde{\varepsilon}_f -\tilde{\varepsilon}_g \equiv 0 \pmod{4}$.
\end{lem}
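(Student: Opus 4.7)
The plan is to prove the lemma by a swap-and-replace invariance argument: I will show that the signed quantity $\varepsilon_0(f) - \varepsilon_1(f)$ is invariant modulo $4$ across all injective Fibonacci labelings of $\Gamma(n, \pm\{1,2,\ldots,j\})$, from which the stated congruence on $\tilde{\varepsilon}$ follows.

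The starting observation is that $F_i$ is even precisely when $i \equiv 0 \pmod 3$, so each Fibonacci label carries a well-defined parity, and an edge $uv$ receives induced label $0$ if and only if $f(u)$ and $f(v)$ share that parity. Hence $\varepsilon_0 - \varepsilon_1$ depends only on the induced parity function $\pi_f : V \to \{0,1\}$ given by $\pi_f(v) = f(v) \bmod 2$. Any two injective labelings $f, g$ drawn from $\{F_0, \ldots, F_n\}$ can be connected by a short sequence of two kinds of elementary moves: \emph{swaps}, which interchange the labels at two vertices (preserving the multiset of used labels), and \emph{relabelings}, which replace the label at a single vertex by a previously-unused Fibonacci number (possibly flipping one vertex's parity). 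Since the multisets of labels used by $f$ and $g$ differ by only a handful of entries, at most a bounded number of relabelings, followed by swaps, suffice to pass from $f$ to $g$.

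The heart of the proof is a short local calculation showing that each elementary move shifts $\varepsilon_0 - \varepsilon_1$ by a multiple of $4$. For a swap of $u, v$ with opposing parities, let $a$ and $c$ denote the number of neighbors of $u$ (resp.\ $v$) carrying $u$'s original parity, excluding $v$ from $N(u)$ and $u$ from $N(v)$ whenever $u \sim v$. Since $\Gamma(n, \pm\{1,2,\ldots,j\})$ is $2j$-regular, the complementary counts $b, d$ satisfy $a+b = c+d$, and a direct tally of the flipped edge labels collapses the net change to exactly $4(c - a)$. In the adjacent case the edge $uv$ itself contributes nothing, since swapping endpoints of differing parities preserves the property that they differ. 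An analogous calculation for a relabeling that flips vertex $v$ from parity $p$ to parity $q$ yields a change of $4(j - a)$, where $a$ counts the parity-$p$ neighbors of $v$; regularity is again what guarantees divisibility by $4$.

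Assembling these pieces, I obtain $\varepsilon_0(f) - \varepsilon_1(f) \equiv \varepsilon_0(g) - \varepsilon_1(g) \pmod 4$, and the conclusion for $\tilde{\varepsilon}$ follows. The main obstacle I anticipate is the bookkeeping in the adjacent-swap case: one must be meticulous about which counts include the partner vertex and verify that the edge $uv$ is neutral, so that the clean identity $2(b - a) + 2(c - d) = 4(c - a)$ applies uniformly across both the adjacent and non-adjacent subcases. Everything else reduces to a routine reachability argument in the configuration space of injective labelings under swaps and relabelings.
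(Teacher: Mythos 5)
Your strategy is essentially the paper's: the paper reduces (without comment) to two labelings that differ at a single vertex and counts how the edge labels at that vertex flip, which is exactly your ``relabeling'' move; your addition of label swaps and the explicit increments $4(j-a)$ and $4(c-a)$ is a more careful version of the same local-perturbation argument. Your bookkeeping is correct: regularity gives $a+b=c+d$ in both the adjacent and non-adjacent subcases, the edge $uv$ is indeed neutral, and the reachability step is genuinely routine, since the label pool $\{F_0,\dots,F_n\}$ exceeds the number of vertices by one, so one relabeling followed by swaps (or a sequence of single relabelings through the spare label) connects any two injective labelings through injective ones. In this respect your write-up is tighter than the paper's, whose final display $|2k-2j|-|2j-2k|$ is identically zero and conflates the local contribution at $v_0$ with the global quantity $\tilde{\varepsilon}$.

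The genuine gap is the last sentence, ``the conclusion for $\tilde{\varepsilon}$ follows.'' What your moves establish is that the \emph{signed} quantity $\varepsilon_0-\varepsilon_1$ is constant modulo $4$ over all injective labelings; this does not imply that $|\varepsilon_0-\varepsilon_1|$ is constant modulo $4$. If the signed values for $f$ and $g$ are odd and of opposite signs (e.g.\ $3$ and $-1$), they agree modulo $4$ while their absolute values differ by $2$. This is not a fixable presentation issue: for $\Gamma(5,\pm\{1\})=C_5$ the available labels have two even and three odd values, and placing the two even labels on adjacent versus non-adjacent vertices gives $\tilde{\varepsilon}=1$ and $\tilde{\varepsilon}=3$, so the lemma as stated actually fails when $|E|=nj$ is odd. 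The passage from the signed to the unsigned statement is valid exactly when $\varepsilon_0-\varepsilon_1$ is forced to be even, i.e.\ when $|E|$ is even, which is the only situation in which the paper uses the lemma (namely $|E|\equiv 2\pmod 4$). The paper's own proof has the same defect, so the right repair is to prove and invoke the signed version (or add the hypothesis that $nj$ is even), after which your argument goes through; as written, though, the final inference is a real gap.
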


\begin{proof}
Without loss of generality, assume that the labelings $f$ and $g$ differ only at a single vertex $v_0 \in V(\Gamma(n,\pm\{1, 2, \ldots, j\}))$, i.e., $f(v_0) \neq g(v_0)$ and $f(v) = g(v)$ for all $v \neq v_0$. Let $v_1, v_2, \ldots, v_{2j}$ denote the $2j$ neighbors of $v_0$ in the circulant graph. For the labeling $f$, suppose that $v_0$ shares the same parity with exactly $k$ of its neighbors. Since $f$ is a Fibonacci labeling, this means that $k$ edges will have induced $0$ labels, and the remaining $ 2j-k$ edges have label $1$. Thus, the contribution to $\tilde{\varepsilon}_f$ from the edges incident to $v_0$ is:
\[
\sum_{i=1}^{2j} f^*(v_0v_i) = \vert k - (2j - k) \vert = \vert 2k - 2j\vert .
\]

Similarly, since $f$ and $g$ differ only at $v_0$, the parity relations at $v_0$ flip, and under $g$, $v_0$ has the same parity as $2j - k$ neighbors. Therefore,
\[
\sum_{i=1}^{2j} g^*(v_0v_i) = \vert (2j - k)- k \vert =\vert  2j - 2k\vert .
\]

Then the net difference in total signed edge weights becomes:
\[
\tilde{\varepsilon}_f - \tilde{\varepsilon}_g =\vert 2k - 2j \vert - \vert 2j - 2k \vert  \equiv 0 \pmod{4}.
\]

Hence, the result follows.
\end{proof}



\begin{thrm}
The circulant graph $\Gamma(n,\pm\{1,2,\ldots,4m+1\})$, where $m \le \frac{n-2}{4}$, is not Fibonacci cordial whenever $n \equiv 2 \pmod{4}$.
\end{thrm}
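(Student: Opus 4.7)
The plan is to establish a mod-$4$ obstruction: for every injective Fibonacci labeling $f$, the quantity $\tilde{\varepsilon}(f)$ is $\equiv 2 \pmod 4$, which immediately precludes $\tilde{\varepsilon}\le 1$. The cleanest route is a direct degree-parity argument; Lemma~\ref{cycle} then gives an independent confirmation by reducing the verification to any single convenient labeling.

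Write $n = 4t+2$, and first consider the principal regime $4m+1 < n/2$, where the $2(4m+1)$ shifts in $\pm\{1,\ldots,4m+1\}$ are pairwise distinct modulo $n$ and every vertex of $\Gamma$ has the same even degree $d = 8m+2$. The total edge count is
$$|E| \;=\; n(4m+1) \;=\; (4t+2)(4m+1) \;\equiv\; 2 \pmod 4,$$
since $4t+2 \equiv 2 \pmod 4$ and $4m+1$ is odd. For an arbitrary labeling $f$, set $x_v = f(v) \bmod 2 \in \{0,1\}$. The integer identity $x_u \oplus x_v = x_u + x_v - 2x_u x_v$, summed over edges, yields
$$\varepsilon_1(f) \;\equiv\; \sum_{uv \in E}(x_u + x_v) \;=\; \sum_{v \in V} d_v\, x_v \;\equiv\; 0 \pmod 2,$$
the last congruence because every $d_v = d$ is even. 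Hence $\varepsilon_1(f)$ is always even, whence $\varepsilon_0 - \varepsilon_1 = |E| - 2\varepsilon_1 \equiv 2 \pmod 4$, and therefore $\tilde{\varepsilon}(f) \ge 2$, ruling out Fibonacci cordiality.

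The main obstacle I anticipate is the boundary regime $4m+1 \ge n/2$. Since $n$ is even, $n/2$ is self-inverse in $\mathbb{Z}_n$, so once $n/2$ enters the connection set, $|S|$ becomes odd, every vertex has odd degree, and $|E|$ itself becomes odd, which breaks the uniform degree-parity step above. In that regime one would apply Lemma~\ref{cycle} to reduce to a single distinguished labeling --- for instance $f(v_i) = F_i$ for $i = 0, \ldots, n-1$, whose parity pattern is $x_i = [i \not\equiv 0 \pmod 3]$ --- and then directly count $\varepsilon_1$ with a case split on $n \bmod 3$ to handle the wrap-around effects, verifying that $\tilde{\varepsilon} \pmod 4$ still avoids $\{0,1\}$.
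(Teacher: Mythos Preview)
Your argument is correct and essentially matches the paper's, which likewise computes $|E(\Gamma)|=(4q+2)(4m+1)\equiv 2\pmod 4$ and then appeals to Lemma~\ref{cycle}; your direct degree-parity step ($\varepsilon_1\equiv\sum_v d_v x_v\equiv 0\pmod 2$ because every $d_v=8m+2$ is even) is precisely the self-contained reason that lemma applies, so you have made explicit what the paper leaves implicit. The boundary regime $4m+1\ge n/2$ need not concern you: the paper's own edge-count formula $|E|=n(4m+1)$ already presupposes that the $2(4m+1)$ shifts are pairwise distinct, i.e.\ $4m+1<n/2$, which is the standing convention for $\Gamma(n,\pm\{1,\dots,j\})$ throughout, so you may safely drop that final paragraph.
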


\begin{proof}
Suppose $n = 4q + 2$ for some positive integer $q$. Then the number of edges in $\Gamma$ is given by 
\[
|E(\Gamma)| = (4q + 2)(4m + 1) = 4(4mq + 2m + q) + 2.
\]
It follows that $|E(\Gamma)| \equiv 2 \pmod{4}$. By the necessary condition established in the preceding theorem, this congruence implies that a Fibonacci cordial labeling is not possible. Hence, the result follows.
\end{proof}


Similar reasoning yields the following result.

\begin{thrm}
The circulant graph $\Gamma(n,\pm\{1,2,\ldots,j\})$ is not Fibonacci cordial in the following cases:
\begin{enumerate}
    \item When $j \equiv 2 \pmod{4}$ and $n \equiv 1 \pmod{2}$,
    \item When $j \equiv 3 \pmod{4}$ and $n \equiv 2 \pmod{4}$.
\end{enumerate}
\end{thrm}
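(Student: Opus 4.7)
The plan is to imitate the argument of the preceding theorem almost verbatim: once one verifies that the edge count $|E(\Gamma)|$ is congruent to $2$ modulo $4$, the conclusion follows from Lemma~\ref{cycle} exactly as in the $j \equiv 1 \pmod 4$ case. Since the circulant graph $\Gamma(n,\pm\{1,2,\ldots,j\})$ is $2j$-regular, the handshaking lemma gives $|E(\Gamma)| = nj$, so the task reduces to computing $nj \bmod 4$ in each of the two listed congruence regimes.

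For case (1) I would write $j = 4s+2$ and $n = 2t+1$, and expand
\[
nj = (2t+1)(4s+2) = 2(2t+1)(2s+1).
\]
Since $(2t+1)(2s+1)$ is a product of two odd integers and hence odd, this gives $nj \equiv 2 \pmod{4}$. For case (2), writing $j = 4s+3$ and $n = 4t+2$ yields
\[
nj = (4t+2)(4s+3) = 2(2t+1)(4s+3),
\]
and again $(2t+1)(4s+3)$ is a product of odd integers, so $nj \equiv 2 \pmod{4}$.

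With $|E(\Gamma)| \equiv 2 \pmod{4}$ established in both cases, I would invoke the necessary condition from the preceding theorem (whose proof rests on Lemma~\ref{cycle}) to conclude that no Fibonacci cordial labeling of $\Gamma$ can exist. There is essentially no substantive obstacle here: the modular arithmetic is routine, and the underlying structural argument has already been developed in the $j\equiv 1\pmod 4$ theorem. The only piece of bookkeeping is to confirm that $(2t+1)(2s+1)$ and $(2t+1)(4s+3)$ are odd, which is immediate from the factorizations above.
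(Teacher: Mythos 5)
Your proposal is correct and follows exactly the reasoning the paper intends when it says ``Similar reasoning yields the following result'': compute $|E(\Gamma)| = nj$ from $2j$-regularity, check $nj \equiv 2 \pmod{4}$ in both congruence regimes, and invoke the necessary condition derived from Lemma~\ref{cycle} just as in the $j \equiv 1 \pmod 4$ theorem. The only shared caveat (present in the paper as well) is the implicit assumption $j < n/2$ so that the graph is genuinely $2j$-regular; otherwise the edge count $nj$ fails, as the exceptional case $\Gamma(6,\pm\{1,2,3\}) \cong K_6$ shows.
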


\begin{thrm}
The circulant graph $\Gamma(n,\pm\{1,2\})$ is Fibonacci cordial if and only if $n \not \equiv 1\ (\mod 2)$.
\end{thrm}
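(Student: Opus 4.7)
The statement is biconditional, so both directions require attention.

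For the necessity --- that $\Gamma(n,\pm\{1,2\})$ is not Fibonacci cordial for odd $n$ --- I would simply invoke the preceding theorem: with $j=2$ one has $j \equiv 2 \pmod{4}$ and $n \equiv 1 \pmod{2}$, falling into case (1) of that theorem. Intrinsically the point is that $|E(\Gamma(n,\pm\{1,2\}))| = 2n \equiv 2 \pmod{4}$ for odd $n$, so Lemma \ref{cycle} forces $\tilde{\varepsilon} \equiv 2 \pmod{4}$ across all injective Fibonacci labelings, contradicting the cordial condition $\tilde{\varepsilon}\le 1$.

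For sufficiency, assume $n$ is even. Since $|E|=2n$ is even, I must exhibit an injective Fibonacci labeling with $\varepsilon_0 = \varepsilon_1 = n$. My plan is to exploit the fact that $n$ even allows the edge decomposition
\[
\Gamma(n,\pm\{1,2\}) \;=\; C_n \;\cup\; C_{n/2}^{(e)} \;\cup\; C_{n/2}^{(o)},
\]
where $C_n$ is the outer cycle of distance-$1$ edges and the two copies of $C_{n/2}$ are the disjoint sub-cycles that distance-$2$ edges induce on the even-indexed and odd-indexed vertices respectively. Call a vertex $E$ if its Fibonacci label is even-valued (equivalently, has index divisible by $3$) and $O$ otherwise. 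A standard block-counting argument on each of the three cycles shows that the total number of bichromatic edges equals twice the sum of $E$-block counts over those three cycles (with a monochromatic cycle contributing $0$). The balance $\varepsilon_1 = n$ therefore reduces to choosing the $E$-positions so that this sum of block counts is exactly $n/2$, subject to the supply constraint that the number $a$ of $E$-vertices lies in $\{\lfloor n/3\rfloor, \lfloor n/3\rfloor + 1\}$, since $\{F_0,F_1,\ldots,F_n\}$ contains exactly $\lfloor n/3\rfloor + 1$ even-valued Fibonacci numbers.

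The construction is then completed by cases on $n \bmod 12$ --- a natural modulus capturing both the period-$3$ parity of Fibonacci numbers and the period-$4$ structure of the sub-cycles. In each residue class one specifies an explicit placement of the $E$-positions. For small $n \in \{4,6\}$ the alternating pattern $EOEO\cdots$ suffices; for $n \ge 8$, however, the shortage $\lfloor n/3\rfloor + 1 < n/2$ prevents alternation, so one uses instead, for example, two opposite pairs such as $\{0,1,n/2,n/2+1\}$ (which works for $n=12$), or evenly spaced singletons such as $\{0,4,8,\ldots\}$, fine-tuned per residue class. The remaining vertices absorb odd-parity Fibonacci labels injectively, leaving one label unused; verification of $\varepsilon_0 = \varepsilon_1 = n$ in each case is then a direct tally from the block formula.

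The main obstacle is precisely this scarcity of even-valued Fibonacci numbers: a naive alternating coloring would require $n/2$ evens, while only $\lfloor n/3\rfloor + 1$ are available. One must therefore orchestrate the placement carefully so that three simultaneous cuts (on $C_n$ and the two inner $C_{n/2}$'s) combine to hit the prescribed total $n$, and no single uniform pattern appears to work for all even $n$ --- hence a modest case analysis seems unavoidable.
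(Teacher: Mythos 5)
Your necessity argument is fine and is exactly what the paper does: with $j=2\equiv 2\pmod 4$ and $n$ odd, the preceding non-cordiality theorem (via Lemma \ref{cycle} and $|E|=2n\equiv 2\pmod 4$) rules out a Fibonacci cordial labeling, so only the even case needs a construction. One caveat: Lemma \ref{cycle} by itself only says all labelings share the same residue of $\tilde{\varepsilon}$ modulo $4$; to conclude that this common residue is $2$ you still need a reference computation (e.g.\ that $\varepsilon_1$ is always even because every single-vertex parity flip changes it by an even amount), but since you lean on the stated theorem this is a presentational point, not an error.

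The sufficiency direction, however, is a plan rather than a proof, and that is where the genuine gap lies. Your reduction is sound and even attractive: decompose $\Gamma(n,\pm\{1,2\})$ into the outer $C_n$ and the two inner cycles on even- and odd-indexed vertices, observe that on each cycle the number of $1$-edges is twice the number of maximal blocks of even-labeled (``$E$'') vertices, note the supply constraint $a\in\{\lfloor n/3\rfloor,\lfloor n/3\rfloor+1\}$, and reduce cordiality to making the total block count equal $n/2$. But the theorem requires an explicit placement of the $E$-positions for \emph{every} even $n$, together with a verification that the block total is exactly $n/2$; you only gesture at this (``two opposite pairs such as $\{0,1,n/2,n/2+1\}$ \ldots fine-tuned per residue class''), verify a single value $n=12$, and yourself concede that ``a modest case analysis seems unavoidable'' without carrying it out. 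Moreover, the one small case you do assert is wrong: for $n=4$ the graph $\Gamma(4,\pm\{1,2\})$ is $K_4$ with only $6$ edges (the $\pm 2$ edges form a matching, not two cycles, so your decomposition and the target $\varepsilon_0=\varepsilon_1=n$ both fail there), and the alternating pattern $EOEO$ gives $\varepsilon_1=4$, $\varepsilon_0=2$, i.e.\ $\tilde{\varepsilon}=2$; one even label, not two, is what works. By contrast, the paper closes this direction by writing down a concrete assignment of the even Fibonacci indices depending on $n\bmod 6$ (essentially placing consecutive even-indexed labels on vertex pairs $4k,4k+1$, with one extra placement for $n\equiv 0,4\pmod 6$) and filling the remaining vertices with odd labels. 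To complete your argument you would need to supply the analogous explicit placement (or a uniform counting argument showing a placement with block total $n/2$ always exists under the supply constraint) for every even $n\ge 6$, plus a separate treatment of the degenerate small cases.
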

\begin{proof}
We need only to prove that $\Gamma(n,\pm\{1,2\})$ is Fibonacci cordial when $n$ is even, i.e., $n \equiv 0 \pmod{2}$. We do so by explicitly constructing a Fibonacci cordial labeling of the vertices of $\Gamma$. For $n\equiv 2 \ (\mod 6)$ we consider the following function that assign even Fibonacci numbers to the vertices. 

First, consider the case when $n \equiv 2 \pmod{6}$. Define a vertex labeling function $f$ as follows:
\[
f(v_i) =
\begin{cases}
F_0, & \text{if } i = 1, \\
F_3, & \text{if } i = 2, \\
F_{3i/2}, & \text{if } i \equiv 0 \pmod{4}, \\
F_{3(i+1)/2}, & \text{if } i \equiv 1 \pmod{4} \text{ and } i > 1,
\end{cases}
\]
for $i \leq \frac{2(n-2)}{3}$. The remaining vertices are labeled arbitrarily with distinct odd Fibonacci numbers to ensure balance in the edge label distribution.

For other even values of $n$, we extend this labeling scheme:

\begin{itemize}
\item If $n \equiv 4 \pmod{6}$, define \( f\left(\frac{2n - 5}{3}\right) = F_{n-1} \).
\item If $n \equiv 0 \pmod{6}$, define \( f\left(\frac{2n}{3}\right) = F_n \).
\end{itemize}

These assignments preserve the necessary distribution of edge labels as derived from Fibonacci values, thereby fulfilling the conditions for a Fibonacci cordial labeling and completing the proof.
\end{proof}
\begin{figure}[h!]
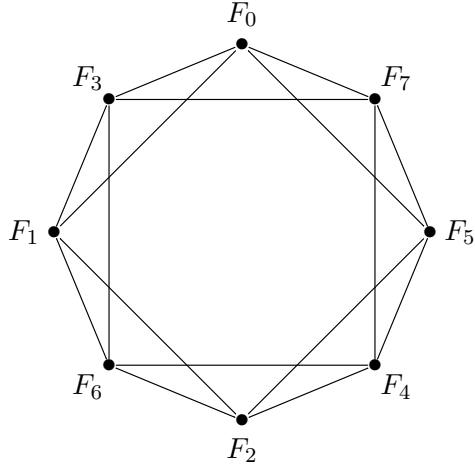

\[
\xygraph{
!{<0cm,0cm>;<0cm,1cm>:<-1cm,0cm>::}
!{(0,0);a(0)**{}?(2.5)}*{\bullet}="v0" !{(0,0);a(0)**{}?(2.9)}*{F_0}
!{(0,0);a(45)**{}?(2.5)}*{\bullet}="v1" !{(0,0);a(45)**{}?(2.9)}*{F_3}
!{(0,0);a(90)**{}?(2.5)}*{\bullet}="v2" !{(0,0);a(90)**{}?(2.9)}*{F_1}
!{(0,0);a(135)**{}?(2.5)}*{\bullet}="v3" !{(0,0);a(135)**{}?(2.9)}*{F_6}
!{(0,0);a(180)**{}?(2.5)}*{\bullet}="v4" !{(0,0);a(180)**{}?(2.9)}*{F_2}
!{(0,0);a(225)**{}?(2.5)}*{\bullet}="v5" !{(0,0);a(225)**{}?(2.9)}*{F_4}
!{(0,0);a(270)**{}?(2.5)}*{\bullet}="v6" !{(0,0);a(270)**{}?(2.9)}*{F_5}
!{(0,0);a(315)**{}?(2.5)}*{\bullet}="v7" !{(0,0);a(315)**{}?(2.9)}*{F_7}
"v0"-"v1" "v1"-"v2" "v2"-"v3" "v3"-"v4" "v4"-"v5" "v5"-"v6" "v6"-"v7" "v7"-"v0"
"v0"-"v2" "v1"-"v3" "v2"-"v4" "v3"-"v5" "v4"-"v6" "v5"-"v7" "v6"-"v0" "v7"-"v1"
}
\]
\caption{Fibonacci cordial labeling of $\Gamma(8,\pm\{1,2\})$.}
\label{circulant8}
\end{figure}


{
\begin{thrm}
The circulant graph $\Gamma(n,\pm\{1,2,3\})$ is Fibonacci cordial if and only if $n = 6$ or $n \equiv 0,1,3 \pmod{4}$.
\end{thrm}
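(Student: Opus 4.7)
The plan is to establish the biconditional by handling necessity and sufficiency separately.

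\textbf{Necessity.} Since $j = 3 \equiv 3 \pmod{4}$, part (2) of the preceding theorem immediately shows that $\Gamma(n,\pm\{1,2,3\})$ is not Fibonacci cordial whenever $n \equiv 2 \pmod{4}$ and the connection set $\pm\{1,2,3\}$ consists of six distinct elements of $\mathbb{Z}_n$, that is, whenever $n \geq 7$. The value $n = 6$ is genuinely exceptional: because $\pm 3 \equiv 3 \pmod{6}$, the graph collapses to $K_6$, which is Fibonacci cordial by the classification of complete graphs recalled just before Definition~\ref{circulant}. This explains why $n=6$ is singled out in the statement even though $6 \equiv 2 \pmod{4}$.

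\textbf{Sufficiency.} For each remaining residue class I would exhibit an explicit Fibonacci labeling. Using that $F_t$ is even if and only if $3 \mid t$, once the parity sequence $p_i := f(v_i) \bmod 2$ is fixed on $\mathbb{Z}_n$ the task reduces to checking that
\[
\varepsilon_1 \;=\; \sum_{k=1}^{3}\sum_{i \in \mathbb{Z}_n} [\,p_i \neq p_{i+k}\,]
\]
satisfies $|\varepsilon_1 - 3n/2| \leq 1/2$, since $\varepsilon_0 + \varepsilon_1 = 3n$. Given such a pattern, distinct even-indexed Fibonacci numbers $F_0, F_3, F_6, \ldots$ are assigned to the even-parity vertices and distinct odd-indexed ones to the remainder, which automatically ensures injectivity. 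A natural starting point is to place the even labels at positions $\{0, 3, 6, \ldots\}$; for $n = 8$ this already yields $\varepsilon_0 = \varepsilon_1 = 12$ by a short direct calculation, but the placement must be perturbed once $3 \mid n$, because otherwise the distance-$3$ edges pick up no parity changes and the totals become imbalanced. A mild relocation of a few even positions, for example $\{0,3,7,10\}$ when $n = 12$, or $\{0,2,5\}$ in place of $\{0,3,6\}$ when $n = 9$, restores balance in the respective residue classes. The small complete-graph cases $n = 7, 11$, where $\Gamma(n,\pm\{1,2,3\}) = K_n$, are absorbed by the cited Fibonacci cordiality of $K_7$ and $K_{11}$.

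\textbf{Main obstacle.} The balance must hold simultaneously across the three cyclic shifts $k = 1, 2, 3$, so the naive period-$3$ placement suffices for only some residue classes and needs local correction in the others. Producing compact placement formulas that are uniform within each subcase of $n \pmod{12}$ (or a similar small modulus), and then verifying $|\varepsilon_0 - \varepsilon_1| \leq 1$ in each subcase, constitutes the bulk of the work. Once the placement is specified in closed form, the verification becomes a routine finite check in the spirit of the proofs that precede this theorem in the paper.
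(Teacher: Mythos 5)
Your framework is the same as the paper's: necessity via part (2) of the preceding theorem with $j=3$ (so every $n\equiv 2\pmod 4$ with $n\ge 10$ is excluded), with $n=6$ exceptional because $3\equiv-3\pmod 6$ collapses the graph to $K_6$; sufficiency by deciding which vertices receive even-indexed Fibonacci labels (using that $F_t$ is even iff $3\mid t$) so that the cross-parity edge count over the three shift classes equals $\lfloor 3n/2\rfloor$ or $\lceil 3n/2\rceil$. The problem is that your sufficiency half is a plan, not a proof. You verify $n=8,9,12$ by hand and then state that producing uniform placement formulas in each residue class modulo $12$, and checking $|\varepsilon_0-\varepsilon_1|\le 1$ for them, ``constitutes the bulk of the work.'' That deferred work is precisely what the theorem's proof consists of: the paper gives explicit even-label placements for nine residue classes mod $12$, valid for $n\ge 29$, and disposes of $7\le n\le 28$ by computational verification. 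Without such closed-form placements (and without addressing the supply constraint that only $\lfloor n/3\rfloor+1$ even labels exist among $F_0,\dots,F_n$, which bounds how many even-parity vertices your pattern may use and how many even--even adjacencies it must create), the ``if'' direction is not established.

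There is also a concrete error in your treatment of small cases: $\Gamma(11,\pm\{1,2,3\})$ is not $K_{11}$. Its connection set $\{1,2,3,8,9,10\}$ has six elements, so the graph is $6$-regular on $11$ vertices, and the cited Fibonacci cordiality of complete graphs does not apply to it; only $n=6$ and $n=7$ collapse to complete graphs in this family. So $n=11$ (and every other admissible $n$ not covered by a general formula) still needs an explicit labeling or the computational check that the paper invokes. Your necessity argument, by contrast, is sound and matches the paper's.
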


\begin{proof}
First, observe that when $n = 6$, the circulant graph $\Gamma(6,\pm\{1,2,3\})$ is isomorphic to the complete graph $K_6$, which is known to be Fibonacci cordial (see \cite{MitraB2020}). Moreover, computational verification shows that $\Gamma(n,\pm\{1,2,3\})$ is Fibonacci cordial for all $7 \leq n \leq 28$; in particular, $\Gamma(7,\pm\{1,2,3\})$ is also a complete graph and thus Fibonacci cordial.

It remains to construct a Fibonacci cordial labeling for $n \geq 29$ when $n \equiv 0,1,3 \pmod{4}$. We begin with an explicit construction for $n \equiv 1 \pmod{12}$ and then describe how to extend or adapt this labeling to other congruence classes.

Note that we only detail the labeling of a subset of the vertices using even Fibonacci numbers. The remaining vertices can be assigned unused odd Fibonacci numbers in such a way that the edge labels are balanced, ensuring a Fibonacci cordial labeling.

\begin{mycases}
\case \label{case1} \textbf{$n \equiv 1 \pmod{12}$:} For $1 \leq i < 21$, define:
\[
f(v_i) =
\begin{cases}
F_{3i/2}, & \text{if } i = 2, 6, 10, 14, \\
F_{3(i-1)/2}, & \text{if } i = 1, 5, 9, 13, 17.
\end{cases}
\]
For $21 \leq i < \frac{3(n-1)}{4}$, set:
\[
f(v_i) =
\begin{cases}
F_{(4i-3)/3}, & \text{if } i \equiv 3 \pmod{9}, \\
F_{(4i+2)/3}, & \text{if } i \equiv 4 \pmod{9}, \\
F_{(4i-1)/3}, & \text{if } i \equiv 7 \pmod{9}, \\
F_{(4i+5)/3}, & \text{if } i \equiv 8 \pmod{9}.
\end{cases}
\]

\case \textbf{$n \equiv 3 \pmod{12}$:} Extend the labeling from Case~\ref{case1} and define
\[
f\left(\frac{3(n-3)}{4}\right) = F_n.
\]

\case \textbf{$n \equiv 4 \pmod{12}$:} Use the labeling from Case~\ref{case1} and set
\[
f\left(\frac{3n - 4}{4}\right) = F_{n-1}.
\]

\case \textbf{$n \equiv 5 \pmod{12}$:} Use the same labeling as in Case~\ref{case1} for $21 \leq i \leq \frac{3(n-1)}{4}$.

\case \textbf{$n \equiv 7 \pmod{12}$:} Use the same labeling as in Case~\ref{case1} for $21 \leq i < \frac{3(n-7)}{4}$, and then define:
\[
f(v_{\frac{3(n-7)}{4}}) = F_{n-4}, \quad f(v_{\frac{3n - 5}{4}}) = F_{n-1}.
\]

\case \label{case8} \textbf{$n \equiv 8 \pmod{12}$:} Use the same labeling as in Case~\ref{case1} for $21 \leq i \leq \frac{3n - 8}{4}$.

\case \textbf{$n \equiv 9 \pmod{12}$:} Use the labeling from Case~\ref{case8} and define
\[
f(v_{\frac{3n - 7}{4}}) = F_n.
\]

\case \textbf{$n \equiv 11 \pmod{12}$:} Use the same labeling as in Case~\ref{case1} for $21 \leq i \leq \frac{3n - 5}{4}$.

\case \textbf{$n \equiv 0 \pmod{12}$:} Use the labeling from Case~\ref{case1} for $21 \leq i \leq \frac{3n}{4} - 5$, and then set:
\[
f(v_{\frac{3n}{4} - 4}) = F_{n-3}, \quad f(v_{\frac{3n}{4} - 1}) = F_n.
\]
\end{mycases}

This completes the proof that $\Gamma(n,\pm\{1,2,3\})$ is Fibonacci cordial precisely when $n = 6$ or $n \equiv 0,1,3 \pmod{4}$.
\end{proof}

}

\begin{thrm}
The circulant graph $\Gamma(n,\pm\{1,2,3,4\})$ is Fibonacci cordial for all $n > 8$.
\end{thrm}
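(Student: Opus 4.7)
Since $\Gamma:=\Gamma(n,\pm\{1,2,3,4\})$ is $8$-regular, $|E(\Gamma)|=4n$. Denoting by $r$ the number of vertices that will receive an even Fibonacci value and by $E_{ee},E_{eo},E_{oo}$ the usual edge-class counts, a direct double count via the degree identities $2E_{ee}+E_{eo}=8r$ and $E_{ee}+E_{eo}+E_{oo}=4n$ gives $\tilde{\varepsilon}=4\,|E_{ee}+n-4r|$, which in particular confirms the mod-$4$ invariance predicted by Lemma~\ref{cycle}. Fibonacci cordiality is therefore equivalent to the single exact equation
\[
E_{ee}\;=\;4r-n,
\]
which forces $n/4\le r\le\lceil n/3\rceil$ (the upper bound being the number of even Fibonacci values $F_0,F_3,F_6,\ldots$ available).

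The plan is to follow the strategy used for the sparser circulants earlier in the section: fix a placement $A\subset V(\Gamma)$ of size $r$ for the even Fibonacci labels, verify $E_{ee}=4r-n$ by a local count, and then distribute the odd Fibonacci values arbitrarily on $V(\Gamma)\setminus A$, since edge labels depend only on endpoint parity classes. The natural template is $r\approx n/3$ with $A=\{v_1,v_4,v_7,\ldots\}$, i.e.\ every third vertex around the cycle. For $n\equiv 0\pmod 3$ this yields $r=n/3$ and the only intra-$A$ cycle-distances are $3\in\{1,2,3,4\}$ (between consecutive members of $A$) and $6\notin\{1,2,3,4\}$, producing exactly $E_{ee}=n/3=4r-n$ as required. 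For $n\equiv 1,2\pmod 3$, I would raise $r$ to $\lceil n/3\rceil$ by inserting one additional even-labeled vertex near the seam of the template, chosen so that it lies within cycle-distance $4$ of exactly three existing members of $A$; this injects the $2$ or $3$ extra same-parity edges needed to bring $E_{ee}$ up to $4r-n$. A finer split into residues modulo $12$ accommodates the interaction with $n\bmod 4$ uniformly, and in each class the placement is obtained from the generic template by a perturbation of bounded length independent of $n$.

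The main obstacle is verifying, for each residue class, that the seam perturbation produces precisely the prescribed number of additional $E_{ee}$-incidences, rather than one too few or one too many through an unintended wrap-around neighbour of the inserted vertex. The hypothesis $n>8$ provides the elbow room needed to keep the perturbation localised and to prevent its cyclic images from interfering with the original members of $A$; once the positions of $A$ are fixed, the injective assignment of odd Fibonacci values to the remaining vertices is immediate, and the construction is complete.
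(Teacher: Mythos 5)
Your proposal is correct in substance and, at the top level, follows the same strategy as the paper: select the positions that receive even Fibonacci labels so that the edge parities balance, and then assign the unused odd Fibonacci numbers to the remaining vertices arbitrarily (the paper, like you, relies on the fact that edge labels depend only on endpoint parity, and its figures show that it treats $F_1$ and $F_2$ as two usable labels, which your $n\equiv 0\pmod 3$ case with $r=n/3$ implicitly needs, since it consumes $2n/3$ odd labels). Where you genuinely differ is in how the placement is justified and what it looks like. The paper never states a balance criterion; it simply writes down, for $n\ge 11$, case-by-case formulas (split by $n\bmod 3$, with cutoff indices depending on $n\bmod 6$) that put the even labels on an initial block of every-third positions followed by a period-five pattern, handles $n=10$ by an explicit labeling and $n=9$ as $K_9$, and leaves the verification to the reader. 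You instead derive the exact criterion $\tilde{\varepsilon}=4\,|E_{ee}+n-4r|$, reduce cordiality to the single equation $E_{ee}=4r-n$, and then use the much simpler template of every third vertex plus at most one inserted vertex. This buys a transparent, checkable argument (and an independent confirmation of the mod-$4$ lemma), whereas the paper's formulas are harder to audit; conversely the paper's version is fully explicit about which vertex gets which Fibonacci index, while yours still has to pin down the insertion point in each residue class.

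A few loose ends in your write-up are worth fixing, though none is fatal. First, the number of available even labels is $\lfloor n/3\rfloor+1$, not $\lceil n/3\rceil$. Second, the bookkeeping for $n\not\equiv 0\pmod 3$ comes out to exactly three extra same-parity edges in both classes, not ``2 or 3'': writing $n=3q+1$ (resp.\ $3q+2$), the base template of $q$ members has wrap gap $4$ (resp.\ $5$), hence $E_{ee}=q$ (resp.\ $q-1$), while the target with $r=q+1$ is $q+3$ (resp.\ $q+2$). So the inserted vertex must see exactly three template members within cycle-distance $4$, which any non-member position whose distance-$4$ window avoids the seam provides (e.g.\ a position two past a member), and ``one too many'' can never occur because template members are pairwise at distance at least $3$, so at most three of them fit in a window of nine consecutive positions; $n>8$ guarantees such a seam-avoiding position exists. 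Finally, the proposed refinement modulo $12$ is unnecessary: since $|E|=4n$, the condition $E_{ee}=4r-n$ has no interaction with $n\bmod 4$, and the three residue classes modulo $3$ already suffice.
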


\begin{proof}
First, note that $\Gamma(9,\pm\{1,2,3,4\})$ is isomorphic to the complete graph $K_9$, which is known to be Fibonacci cordial (see \cite{MitraB2020}). Therefore, it suffices to prove the result for $n \geq 10$.

We first provide an explicit labeling for $n = 10$ as a special case. Let $f$ be the labeling function, then $f(v_i)=F_{i}$ for $0\le i\le 10$. This assignment generates cordial labeling. 
For $n \geq 11$, we present a unified labeling strategy by dividing the cases based on $n \pmod{3}$. In each case, we assign even-indexed Fibonacci numbers to a subset of the vertices. The remaining vertices can be labeled arbitrarily using unused odd Fibonacci numbers to ensure a Fibonacci cordial labeling.

Let $f$ be the labeling function again, and let $1 \leq i \leq k$, where $k$ depends on $n$ as described in each case below.

\begin{mycases}
\case \textbf{$n \equiv 0 \pmod{3}$}

\[
f(v_i) =
\begin{cases}
F_{3(i-1)}, & \text{for } 1 \leq i \leq 4, \\
F_{9}, & \text{for } i = 7, \\
F_{3(2i + 3)/5}, & \text{for } i > 7 \text{ and } i \equiv 1 \pmod{5}, \\
F_{3(2i + 6)/3}, & \text{for } i > 7 \text{ and } i \equiv 2 \pmod{5}.
\end{cases}
\]

The index $k$ is defined as:
\[
k =
\begin{cases}
\frac{5n - 9}{6}, & \text{if } n \equiv 3 \pmod{6}, \\
\frac{5n}{6} - 3, & \text{if } n \equiv 0 \pmod{6}.
\end{cases}
\]

\case \textbf{$n \equiv 1 \pmod{3}$, $n > 10$}

\[
f(v_i) =
\begin{cases}
F_{3(i-1)}, & \text{for } 1 \leq i \leq 3, \\
F_{9}, & \text{for } i = 6, \\
F_{12}, & \text{for } i = 7, \\
F_{3(2i + 3)/5}, & \text{for } i > 7 \text{ and } i \equiv 1 \pmod{5}, \\
F_{3(2i + 6)/3}, & \text{for } i > 7 \text{ and } i \equiv 2 \pmod{5}.
\end{cases}
\]

The index $k$ is given by:
\[
k =
\begin{cases}
\frac{5n - 23}{6}, & \text{if } n \equiv 1 \pmod{6}, \\
\frac{5n - 14}{6}, & \text{if } n \equiv 4 \pmod{6}.
\end{cases}
\]

\case \textbf{$n \equiv 2 \pmod{3}$}

\[
f(v_i) =
\begin{cases}
F_{3(i-1)}, & \text{for } 1 \leq i \leq 3, \\
F_{9}, & \text{for } i = 6, \\
F_{6i/5}, & \text{for } i > 6 \text{ and } i \equiv 0 \pmod{5}, \\
F_{3(2i + 3)/3}, & \text{for } i > 6 \text{ and } i \equiv 1 \pmod{5}.
\end{cases}
\]

The index $k$ is given by:
\[
k =
\begin{cases}
\frac{5n - 10}{6}, & \text{if } n \equiv 2 \pmod{6}, \\
\frac{5n - 19}{6}, & \text{if } n \equiv 5 \pmod{6}.
\end{cases}
\]
\end{mycases}

Thus, in all cases for $n \geq 10$, a Fibonacci cordial labeling exists.
\end{proof}

We conclude this section with an asymptotic conjecture regarding circulant graphs of the form $\Gamma(n,S)$.

\begin{conj} 
For large $n$, almost every circulant graph $\Gamma(n,S)$ with a ``small" connection set $S$ admits a Fibonacci cordial labeling. 
\end{conj}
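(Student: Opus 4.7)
The plan is first to make the conjecture precise: fix a slowly growing function $k=k(n)$, say $k(n)=O(\log n)$, and aim to show that the fraction of symmetric connection sets $S\subset \Z_n\setminus\{0\}$ of size $k$ for which $\Gamma(n,S)$ is Fibonacci cordial tends to $1$ as $n\to\infty$. The overall strategy is Fourier-analytic, combined with the swap invariant of Lemma~3.2 for a final fine adjustment.

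The starting observation, already implicit throughout Section~3, is that $F_i$ is even exactly when $3\mid i$, so in any injective labeling the set $T$ of vertices receiving an even label has size $t\in\{\lfloor n/3\rfloor,\lfloor n/3\rfloor+1\}$. Writing $\chi(v)=1-2\,\mathbf{1}_T(v)\in\{\pm 1\}$, the quantity to control is
\[
\varepsilon_0-\varepsilon_1 \;=\; \tfrac12\sum_{s\in S}C_\chi(s), \qquad C_\chi(s):=\sum_{v\in\Z_n}\chi(v)\,\chi(v+s).
\]
Applying the discrete Fourier transform on $\Z_n$, the right-hand side equals $\tfrac{1}{2n}\sum_{\xi}|\widehat\chi(\xi)|^2\,\overline{\widehat{\mathbf{1}_S}(\xi)}$, and this decomposition is the main workhorse: the cordiality condition now asks for a choice of $T$ of the prescribed size so that this weighted sum has absolute value at most $1$.

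The argument then proceeds in two layers. First, one isolates the contribution of the trivial character $\xi=0$: it equals $(2t-n)^2|S|/(2n)\approx |S|\,n/18$, an unavoidable positive bias coming from the $1{:}2$ imbalance between even and odd Fibonacci parities. Second, one must show that for almost every $S$ of size $k$ a choice of $T$ exists whose non-zero Fourier modes cancel this bias. Here I would draw $T$ from a structured family (e.g.\ approximate unions of arithmetic progressions or short intervals with tunable boundaries), combine a second-moment estimate with a Chernoff/Azuma-type concentration bound, and take a union bound over the $|S|$ shifts. Lemma~3.2 finishes the job: once the discrepancy lies in a constant-size window around $0$, iterated single-vertex swaps (which alter $\tilde\varepsilon$ by multiples of $4$) drive it down to the nearest admissible value in $\{0,1\}$.

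The main obstacle, and the reason this remains a conjecture, is the mismatch between the size of the bias and the size of the cancellation window. Parseval gives $\sum_{\xi\ne 0}|\widehat\chi(\xi)|^2=\Theta(n^2)$, while for a random $S$ of size $k$ the coefficients $\widehat{\mathbf{1}_S}(\xi)$ have typical magnitude $O(\sqrt{k})$, so Cauchy--Schwarz provides a cancellation window of order only $\sqrt{k}\,n$. For this to dominate the bias $\Theta(kn)$ one needs $k=O(1)$, unless one extracts additional cancellation from the arithmetic structure of $T$ via Gauss-sum or character-sum estimates. Quantifying the permissible growth of $k(n)$, and pinpointing the density-zero family of $S$ on which the construction fails, is what I expect to be the hardest part of any proof.
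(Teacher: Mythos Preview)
The statement you are addressing is a \emph{conjecture}; the paper offers no proof and does not attempt one, so there is nothing on the paper's side to compare your argument against. Your write-up is, correspondingly, not a proof but a research outline --- and you say as much in your final paragraph, where you identify the Cauchy--Schwarz mismatch between the $\Theta(kn)$ bias from the zero mode and the $O(\sqrt{k}\,n)$ cancellation window as the unresolved core difficulty. That diagnosis is accurate, and it is exactly why the statement remains open.

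Two technical remarks on the plan itself. First, the swap lemma you invoke (the paper's Lemma on $\tilde\varepsilon_f-\tilde\varepsilon_g\equiv 0\pmod 4$) is stated and proved only for $\Gamma(n,\pm\{1,\ldots,j\})$, and in the paper it functions purely as an \emph{obstruction}: it rules out cordiality when the edge count forces $\tilde\varepsilon$ into the wrong residue class. Your use of it as a \emph{reduction} tool --- ``drive it down to the nearest admissible value in $\{0,1\}$'' --- needs more. The invariant only tells you that the signed discrepancy $\varepsilon_0-\varepsilon_1$ is fixed modulo~$4$; it does not guarantee that every value in that residue class is actually reachable by single-vertex relabelings, especially since the multiset of available parities is rigid (exactly $\lfloor n/3\rfloor+1$ even Fibonacci numbers among $F_0,\ldots,F_n$, with only one label unused). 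Second, even granting reachability, getting the discrepancy into a constant-size window is not sufficient on its own: you must also land in a residue class compatible with $|\varepsilon_0-\varepsilon_1|\le 1$, and for a positive fraction of $(n,S)$ the class is $2\pmod 4$, which the paper uses precisely to produce its non-cordiality theorems. Any ``almost every $S$'' statement will have to exclude these, and your outline does not yet isolate where that exclusion enters the probabilistic estimate.
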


\section{Conclusion}

In this paper, we explored the existence of Fibonacci cordial labelings for several significant families of graphs, including Generalized Petersen graphs, open and closed helm graphs, joint sum graphs, and circulant graphs of small order. These results contribute to the expanding field of graph labelings motivated by numerical sequences, particularly those derived from the Fibonacci sequence. Potential directions for future research include extending Fibonacci cordiality to larger circulant graphs with smaller connection sets and other Cayley graphs, including various graph products such as the lexicographic and corona products. Another promising direction involves investigating analogous labeling schemes based on other well-known sequences, such as the Tribonacci or Perrin numbers, with the aim of uncovering deeper structural or algebraic relationships within graph labeling theory.

\bibliographystyle{amsplain}

\end{document}